\documentclass[11pt]{article}
\usepackage[utf8]{inputenc}
\usepackage[english]{babel}
\usepackage{authblk}
\usepackage{graphicx}        
\usepackage[caption=false]{subfig}
\usepackage{multicol}        
\usepackage[bottom]{footmisc}
\usepackage{placeins}
\usepackage{changes,todonotes}
\usepackage{pgf}
\usepackage{psfrag}
\usepackage{pgfplots}
\usepackage{hyperref}
\usepackage{appendix}
\usepackage{amsmath,amssymb}
\usepackage{bm}
\usepackage{mathtools}
\usepackage{color}
\usepackage{stmaryrd}
\usepackage{amsthm}
\usepackage{blindtext}
\usepackage{caption}
\usepackage{multirow}
\usepackage[table]{xcolor}

\usepackage[margin=3cm]{geometry}

\usepackage{algorithm}
\usepackage{algorithmic}

\graphicspath{./FIGURES/}

\newcommand{\vertiii}[1]{{\left\vert\kern-0.25ex\left\vert\kern-0.25ex\left\vert #1 
    \right\vert\kern-0.25ex\right\vert\kern-0.25ex\right\vert}}

\newcommand{\trinorm}[1]{{\vert\kern-0.25ex\vert\kern-0.25ex\vert #1 \vert\kern-0.25ex\vert\kern-0.25ex\vert}}

\definecolor{farbe}{gray}{0.80}

\newtheorem{theorem}{Theorem}

\newtheorem{definition}{Definition}

\newtheorem{corollary}{Corollary}

\newtheorem{remark}{Remark}

\pgfplotsset{compat=1.18}

\begin{document}

\title{A Scalable Deflated Conjugate Gradient Solver for the Time-Dependent Pseudo-Stress Stokes Problem}

\author[$\star$]{Alessandra Cancrini}
\author[$\star$]{Gabriele Ciaramella}
\author[$\star$]{Paola F. Antonietti}

\affil[$\star$]{MOX, Laboratory for Modeling and Scientific Computing, Dipartimento di Matematica, Politecnico di Milano, Piazza Leonardo da Vinci 32, I-20133 Milano, Italy}

\affil[ ]{\texttt {\{alessandra.cancrini,gabriele.ciaramella,paola.antonietti\}@polimi.it}}

\maketitle

\begin{abstract}
We propose a novel iterative solution framework for the unsteady Stokes equations in the pseudo-stress formulation. When solving this class of problems by using implicit time-integration schemes, standard solvers suffer from deteriorating convergence properties for small time steps, independently of the chosen space discretisation method. This is due to the singular modes of the dev-dev operator. For this reason, we introduce a computational framework obtained by combining a deflated Conjugate Gradient method with a W-cycle multigrid scheme that employs a Restricted Additive Schwarz smoother. The key point is to choose the deflation subspace so that the inner system to be solved within a deflated Conjugate Gradient scheme corresponds to a Laplace problem defined on the singular modes of the original dev-dev operator. This results to be independent of the spatial discretisation method and allows one to use efficient multigrid iterative solvers. Numerical experiments show that the proposed strategy significantly accelerates the Conjugate Gradient convergence and provides stable performance with respect to the time step, confirming its robustness for solving linear systems in the pseudo-stress framework.

\end{abstract}



\section{Introduction}

The numerical approximation of incompressible viscous flows is a central problem in computational fluid dynamics. In this context, the stress–velocity–pressure formulation of the unsteady Stokes equations has gained considerable interest, particularly due to its relevance for non-Newtonian fluid flow models (see e.g. \cite{Baaijens1998,Keunings2001}) and coupled interface problems, where an accurate representation of the stress tensor plays a crucial role~\cite{Gerritsma1999, Qiu.Zhao2024}. Although the stress can be reconstructed a posteriori in the velocity–pressure formulation by differentiating the velocity field, this approach might compromise the overall accuracy. It is also worth noting that the stress–velocity–pressure formulation introduces an additional difficulty, namely the enforcement of the stress tensor’s symmetry at the discrete level \cite{Arnold1984,Arnold2002}.
A possible strategy to circumvent this issue is to introduce the concept of pseudo-stress, which is a nonsymmetric tensor \cite{CaiLee2004, Cai2007}. Motivated by these features, different works have considered the unsteady Stokes problem written in terms of the pseudo-stress variable only, employing DG methods on polytopal grids (see \cite{Cancrini,TesiCancrini} and references therein).

The present work focuses on the development of a novel computational framework for the solution of discrete incompressible viscous flows in pseudo-stress form.
The final goal is to develop a scalable solver that is robust with respect to both the time step $\Delta t$ and the mesh size $h$. As a first step, in this work we focus on robustness with respect to $\Delta t$ and investigate how the performance of classical iterative methods is affected by its variation. Numerical experiments show that the Conjugate Gradient (CG) method  \cite{Magnus, Saad2003} suffers from a severe deterioration of convergence as $\Delta t$ decreases. This behavior is due to the conditioning of the system matrix \cite{Saad2003, vanderSluis1986}, which scales as $1/\Delta t$. Therefore, smaller time steps produce increasingly ill-conditioned linear systems, and thus require a larger number of iterations to reduce the relative residual below a prescribed tolerance \cite{CancriniCiaramella}.
It is well known that, when the condition number becomes large, improving the spectral properties of the system matrix is  often essential to achieve scalability.
This issue arises independently of the spatial discretisation employed. Indeed, when implicit time integration schemes, such as $\theta$-methods or Backward Differentiation Formulae (BDF), are applied to the pseudo-stress formulation of the unsteady Stokes equations, the resulting algebraic problem is symmetric and positive definite. Nevertheless, already at the semi-discrete level, the spectrum of the operator exhibits a dependence on $1/\Delta t$, leading to increasingly ill-conditioned systems as the time step decreases. As a consequence, the convergence of Krylov subspace methods deteriorates significantly for small values of $\Delta t$, regardless of the finite-dimensional approximation adopted in space. Similar behavior has been observed and investigated in our previous work \cite{CancriniCiaramella}.

To address this issue, we propose a novel solution framework based on a deflated Conjugate Gradient method \cite{Dostl1988ConjugateGM, Kahl, Nicolaides} (see also \cite{Fank2001, Gaul2013, Saad2000}). Deflation techniques are designed to accelerate Krylov subspace iterations by removing from the spectrum those components responsible for slow convergence. A central aspect of the deflated CG method is the choice of the deflation subspace $\mathbf{S}\subset\mathbb{R}^n$, since this choice fully determines the effectiveness of the deflation. For the problem under consideration, the dependence on $1/\Delta t$ can be removed by choosing a subspace $\mathbf{S}$ that contains all singular modes of the deviatoric–deviatoric operator.
Once $\mathbf{S}$ is fixed, the resulting inner operator, namely the projection of the original problem to $\mathbf{S}$, must be efficiently applied to a vector, in order to preserve the overall performance of the iterative scheme. This application requires the solution of a system that arises from the restriction of the original problem to the subspace $\mathbf{S}$. We will see that, for a suitably chosen $\mathbf{S}$, which contains singular modes of the deviatoric–deviatoric operator, this inner system coincides with a Laplace equation, opening the possibility of employing fast and robust iterative solvers.
An efficient implementation of the deflation strategy also requires the computation of suitable projection operators. These projections involve the solution of the original problem restricted to the deflation subspace $\mathbf{S}$ and therefore represent a crucial component of the overall computational cost. One of the main theoretical contributions of this work is to show that such a restricted operator is equivalent to a Laplace problem both at the continuous and at the discrete level. Furthermore, in the discrete setting, the restricted problem inherits exactly the spatial discretisation adopted for the original pseudo-stress formulation. Therefore, if a continuous Galerkin (CG) or a discontinuous Galerkin (DG) discretisation is employed for the original problem, then the corresponding restricted operator becomes a Laplace problem discretised by the same CG or DG method. This property makes the proposed framework completely independent of the chosen spatial approximation and therefore applicable to a broad class of discretisation techniques.

The identification of the restricted problem with a Laplace operator is particularly advantageous, since efficient multigrid solvers for elliptic problems are well established in the literature \cite{Bramble1996TheAO, Bramble1992, Bramble1991TheAO} (see \cite{AntoniettiSarti2014, Antonietti2017VcycleMA, Antonietti2017AUA, Gopalakrishnan} for multigrid applications related to DG methods). For this reason, the proposed framework combines an outer deflated CG iteration with an inner multigrid solver dedicated to the restricted problem. As smoother for the multigrid scheme, we employ a Restricted Additive Schwarz (RAS) iteration \cite{Cai1999ARA,Ciaramella,Gander2008SchwarzMO}, which is well suited to block-structured operators and meshes of general topology. The combination of a W-cycle multigrid strategy with a RAS smoother yields a robust and scalable iterative scheme for the inner system, improving the overall performance of the deflated CG approach.
To further reduce the computational cost, the inner multigrid solver is not required to solve the restricted problem to a fixed accuracy. Instead, its stopping criterion is dynamically adjusted according to the residual of the outer CG iteration \cite{Kahl}. This strategy substantially decreases the total computational effort while preserving the convergence properties of the overall algorithm. Since the inner system is solved only approximately, its influence on the outer iteration may vary between steps. To prevent possible degradation of convergence behavior caused by these variations, the deflated CG approach can be enhanced with a flexible variant of the CG method \cite{axelsson1996iterative,Notay,Saad2003}, which provides greater stability when the system operator changes during iterations.
The numerical experiments presented in this work demonstrate that the proposed methodology significantly improves the convergence behavior of classical Krylov solvers and exhibits robustness with respect to the time step. These results confirm the effectiveness of the proposed deflation strategy and highlight its potential as a building block towards fully scalable solvers for the pseudo-stress formulation of the unsteady Stokes equations.

The structure of the paper is as follows. Section~\ref{sec:stokes_problem} presents the pseudo-stress weak formulation of the unsteady Stokes problem, the associated time semi-discrete formulation, and the reduced system arising from it. Section~\ref{sec:fully_discrete} is devoted to the fully-discrete setting: we introduce a general spatial discretisation, derive the corresponding discrete reduced system, and present examples of spatial discretisation schemes showing how the abstract theory can be applied in practice. In Section~\ref{sec:solvers} we develop a deflated CG strategy to stabilize the iterative solution of the fully discrete system as $\Delta t \to 0$, and we propose a multigrid approach with RAS smoothing to efficiently solve the inner system arising from the deflation procedure. 
We also present a flexible CG variant for solving the system, which improves convergence when the operator changes during iterations. Numerical tests are reported in Section~\ref{sec:num_results}, where we assess the performance of the proposed approach by means of experiments in both two- and three-dimensional settings, employing the spatial discretisation based on a discontinuous Galerkin method on polytopal meshes.
Finally, in Section~\ref{sec:conclusions} we draw some conclusions and an outlook of possible extension of this work.

\section{Variational formulation and time discretisation}\label{sec:stokes_problem}
This section is devoted to the variational setting of the problem. We first introduce the notation and the pseudo-stress weak formulation, then derive a time semi-discrete formulation, and finally present the reduced system together with its Laplacian structure.

\subsection{Notation}
Let $\Omega\subset\mathbb{R}^d$, \textcolor{black}{$d = 2, 3$}, be a \textcolor{black}{bounded open} domain with Lipschitz boundary $\partial\Omega$.
In what follows, for $X \subseteq \Omega$, the notation $\bm{L}^2(X)$ is adopted in place of $[L^2(X)]^d$, and $\mathbb{L}^2(X)$ in place of $[L^2(X)]^{d\times d}$.
The scalar product in $L^2(X)$ is denoted by $(\cdot,\cdot)_X$, with the associated norm $\| \cdot \|_X$.  
Similarly, the Sobolev spaces $\bm{H}^\ell(X)$ are defined as $[H^\ell(X)]^d$, with $\ell\geq 0$, equipped with the norm $\| \cdot \|_{\ell,X}$, assuming conventionally that $\bm{H}^0(X)\equiv\bm{L}^2(X)$. 
In addition, we will use $\bm{H}(\textrm{div},X)$ to denote the space of $\bm{L}^2(X)$ vector fields with square-integrable divergence. Then, the notation $\mathbb{H}(\textrm{div},X)$ is used for the space of tensor fields with rows belonging to $\bm{H}(\textrm{div},X)$.
Finally, \textcolor{black}{for $\Gamma\subseteq\partial\Omega$ closed subset of $\partial\Omega$}, we consider the space $H^{\frac12}(\Gamma) = \{ v \in L^2(\Gamma)\, |\, \exists u\in H^1(\Omega): u_{|\Gamma} = v \}$ and its dual space $H^{-\frac{1}{2}}(\Gamma)$. 
The duality product between $v\in H^{\frac12}(\Gamma)$ and $w\in H^{-\frac12}(\Gamma)$ is denoted by $\langle v,w \rangle_{\Gamma}$.

\subsection{Pseudo-stress weak formulation}

We focus on the time-dependent Stokes problem, which models incompressible viscous free flows, and formulate it in terms of a pseudo-stress unknown rather than its classical form. The derivation of the pseudo-stress formulation for the unsteady Stokes problem is presented in \cite{Cancrini}. The pseudo-stress variable is defined as $\boldsymbol{\sigma}(\boldsymbol{u},p) = \mu\boldsymbol{\nabla}\boldsymbol{u} - p\mathbb{I}_d$, where $\boldsymbol{u}$ is the flow velocity, $p$ its pressure, and $\mathbb{I}_d$ the identity matrix in $\mathbb{R}^{d \times d}$. Let ${\rm \textbf{dev}}(\boldsymbol{\tau}) = \boldsymbol{\tau} - \frac1d {\rm tr}(\boldsymbol{\tau})\mathbb{I}_d$ be the deviatoric operator, where ${\rm tr}(\cdot)$ is the trace operator. 
The boundary of $\Omega$ is partitioned as $ \Gamma_D\cup \Gamma_N = \partial\Omega$, with $\Gamma_D\cap \Gamma_N = \emptyset$. For simplicity, we assume both $\left|  \Gamma_D \right| > 0$ and $\left|   \Gamma_N \right| > 0$, with $|\cdot|$ denoting the Hausdorff measure.
To strongly enforce the essential traction condition on $\Gamma_N$, we define the subspace 
$$
\bm V = \mathbb{H}_{0,\Gamma_N}({\rm div}, \Omega) = \{\boldsymbol{\eta}\in\mathbb{H}({\rm div}, \Omega)\ | \ \langle\bm\eta\ \bm n,\bm v\rangle_{\partial\Omega}=0 \ \ \forall \bm v\in \bm H^1_{0,\Gamma_D}(\Omega)\},
$$
where $\bm H^1_{0,\Gamma_D}(\Omega)= \{\boldsymbol{v} \in \bm H^1(\Omega)^d \;|\; \boldsymbol{v}=\boldsymbol{0} \; \text{on}\; \Gamma_D\}$.
Then, the corresponding weak formulation reads as: for any $t\in (0, T]$, find $\bm \sigma(t) \in \mathbb{H}_{0,\Gamma_N}({\rm div}, \Omega)$ such that  
\begin{equation}\label{weak_pb}
\mathcal{M}(\partial_t \bm \sigma, \bm \tau) + \mathcal{A} (\bm \sigma, \bm \tau) = F(\bm \tau) \quad \quad \quad \forall \, \boldsymbol{\tau}\in \mathbb{H}_{0,\Gamma_N}({\rm div}, \Omega), 
\end{equation}
where for any $\bm \sigma, \bm \tau \in \mathbb{H}_{0,\Gamma_N}({\rm div}, \Omega)$ we have defined
\begin{eqnarray*}
    \mathcal{M}(\bm \sigma, \bm \tau) & = & ( \mu^{-1} \boldsymbol{{\rm dev}}(\boldsymbol{\sigma}), \boldsymbol{{\rm dev}}(\boldsymbol{\tau}) )_{\Omega}, \quad  
    \mathcal{A}(\bm \sigma, \bm \tau) = (\boldsymbol{\nabla}\cdot\boldsymbol{\sigma}, \boldsymbol{\nabla}\cdot\boldsymbol{\tau})_{\Omega}, \\
    F(\bm \tau) & =  &(\boldsymbol{F}, \boldsymbol{\tau})_{\Omega}  +  \langle\bm g_D, \boldsymbol{\tau}\,\boldsymbol{n}\rangle\textcolor{black}{_{\partial \Omega}},
\end{eqnarray*}
with $\mu>0$ the fluid viscosity, and $T>0$ the final observation time. We assume that $\boldsymbol{F}$ is sufficiently regular, 
$\boldsymbol{g}_D(t)\in \bm H^{\frac12}(\partial \Omega)$, and that the initial condition ${\rm \textbf{dev}}(\boldsymbol{\sigma})(\cdot,t=0) = \boldsymbol{\sigma}_0 \in \mathbb{L}^2(\Omega)$. 
We point out that, owing to the definition of the stress space $\mathbb{H}_{0,\Gamma_N}({\rm div}, \Omega)$, the duality pairing on the right-hand side of (\ref{weak_pb}) is taken over the whole boundary $\partial \Omega$, and not only over $\Gamma_D$.
To ensure that this term is well defined, following \cite{Cancrini}, we assume that, for all $t$, the Dirichlet datum $\boldsymbol{g}_D=\boldsymbol{g}_D(t)$ is the trace of a function in $\boldsymbol{H}^1(\Omega)$, which, with a slight abuse of notation we still denote by $\boldsymbol{g}_D$, so that $\boldsymbol{g}_D(t)\in \bm H^{\frac12}(\partial \Omega)$. This allows us to write the duality over $\partial\Omega$ in (\ref{weak_pb}). For further details, the reader is referred to \cite{Cancrini}.

\subsection{Time semi-discrete formulation}
We now introduce a generic semi-discrete formulation of \eqref{weak_pb}, resulting from a discretisation in time.
We restrict our attention to two families of implicit
methods: the $\theta$-method and the BDF schemes.
Let $\alpha = \alpha(\Delta t)$ denote the coefficient associated with the
chosen time discretisation (e.g. $\alpha=\Delta t$ for implicit Euler, 
$\alpha=\Delta t/2$ for Crank--Nicolson). Then, given the initial datum $\bm \sigma^0$, the semi-discrete problem reads: for any $n=1,\dots, N_T$ find $\bm \sigma^{n}\in \mathbb{H}_{0,\Gamma_N}({\rm div}, \Omega)$ such that
\begin{equation}\label{weak_IE}
\mathcal{M}( \bm \sigma^{n}, \bm \tau) + \alpha  \mathcal{A} (\bm \sigma^{n}, \bm \tau) = F_n^{*}(\bm \tau) \quad \quad \quad \forall \, \boldsymbol{\tau}\in \mathbb{H}_{0,\Gamma_N}({\rm div}, \Omega), 
\end{equation}
where superscript $n$ means the approximation/evaluation of the given quantity at time $t_n = n \Delta t$, $n=1,\dots, N_T$, while $F_n^{*}(\cdot)$ denotes the load functional  associated with the chosen implicit time discretisation. In particular, $F_n^{*}(\cdot)$ depends on previous time steps: only on
$n-1$ in a $\theta$-method, and on multiple past steps in BDF schemes. 

Let us rewrite
the forms $\mathcal{M}(\cdot,\cdot), \ \mathcal{A}(\cdot,\cdot)$ and $F_n^*(\cdot)$ in a componentwise form: 
\begin{equation}\label{component_form}
\begin{aligned}
    & \mathcal{M}(\bm \sigma, \bm \tau) =  \mu^{-1} \Bigg[ \sum_{k=1}^d \Big(1-\tfrac{1}{d}\Big) \ m(\sigma_{kk}, \tau_{kk}) + \sum_{\substack{k, i=1 \\ k\neq i}}^d \Big( -\tfrac{1}{d} \ m(\sigma_{kk}, \tau_{ii}) + m(\sigma_{ki}, \tau_{ki}) \Big) \Bigg], 
\\ & 
    \mathcal{A}(\bm \sigma, \bm \tau) =  \sum_{k, i, j=1}^d a_{ij}(\sigma_{ki}, \tau_{kj}), \qquad \qquad  F_n^*(\bm \tau) =  \sum_{i, j=1}^d f_{ij}(\tau_{ij}). 
\end{aligned}
\end{equation}
In this way, the formulation is kept at an abstract level, and the associated bilinear and linear forms $m(\cdot,\cdot), \ a_{ij}(\cdot,\cdot)$ and $f_{ij}(\cdot)$ 
are defined at the continuous spatial level as follows:
\begin{align*}
    m(\sigma, \tau) = \int_\Omega \sigma     \ \tau \ \text{d}x, \quad \quad a_{ij}(\sigma, \tau) = \int_\Omega \partial_i \sigma \ \partial_j \tau \ \text{d}x, \quad \quad f_{ij}(\tau) = \int_\Omega F^*_{ij} \  \tau \  \text{d}x.
\end{align*}

\subsection{Reduced system and Laplacian structure}\label{reduced_continuous}

Since the bilinear form $\mathcal{M}(\cdot,\cdot)$ has a nontrivial kernel,
there exist nonzero functions in $\bm V$ that are not detected by 
$\mathcal{M}(\cdot,\cdot)$. To properly treat these singular modes, we introduce a \textit{reduced system} obtained by restricting (\ref{weak_IE}) to the kernel of $\mathcal{M}(\cdot,\cdot)$ defined as
\begin{equation}\label{spaceS}
\begin{aligned}
\mathbf{S} = \ker(\mathcal{M}) 
&= \Big\{\, \boldsymbol{\sigma} \in \boldsymbol{V} \; | \ \;
\mathcal{M}(\boldsymbol{\sigma},\boldsymbol{\tau}) = 0
\quad \  \forall\, \boldsymbol{\tau} \in \boldsymbol{V} \,\Big\} \\
&=\{ \bm \sigma \in \bm {V} \ | \  \sigma_{ii} = \sigma_{jj} \ \ \forall i,j = 1,\dots,d,
        \ \ \text{and } \ \sigma_{ij} = 0 \ \text{ for } i \neq j  \}.
\end{aligned}
\end{equation}
Both bilinear forms $\mathcal{M}(\cdot,\cdot)$ and $\mathcal{A}(\cdot,\cdot)$ induce seminorms on the underlying space and are therefore only positive semi-definite, i.e., they each admit a nontrivial kernel.  As shown in~\cite{Cancrini}, the kernels of the two forms intersect only trivially, i.e., $
\ker(\mathcal{M}) \cap \ker(\mathcal{A}) = \{0\},$
\textcolor{black}{which guarantees that the combined operator is positive definite, and hence induces a norm with respect to which coercivity holds \cite{Cancrini}.}
On the subspace $\mathbf{S}$, the contribution of $\mathcal{M}(\cdot,\cdot)$ vanishes, while $\mathcal{A}(\cdot,\cdot)$ becomes positive definite. On the complementary subspace, $\mathcal{M}(\cdot,\cdot)$ is positive definite whereas $\mathcal{A}(\cdot,\cdot)$ remains positive semi-definite. The same property persists at the discrete level. \textcolor{black}{This structure also explains the $\mathcal{O}(1/\Delta t)$ growth of the 
condition number of the combined operator: the singular modes of 
$\mathcal{M}(\cdot,\cdot)$ force the smallest eigenvalue to scale as 
$\mathcal{O}(\Delta t)$, while the largest remains $\mathcal{O}(1)$. Hence, the condition number of the corresponding system matrix grows as $\mathcal{O}(1/\Delta t)$.}

\begin{definition}[Continuous Reduced System]\label{reduced_cont}
The reduced system, obtained by restricting the time semi-discrete problem (\ref{weak_IE}) to the subspace $\mathbf{S}$ defined in (\ref{spaceS}), reads as follows:
given $\bm \sigma^0$, for any $n=1,\dots, N_T$ find $\bm \sigma^{n}\in \mathbf{S}$ such that
\begin{equation*}
\mathcal{M}( \bm \sigma^{n}, \bm \tau) + \alpha  \mathcal{A} (\bm \sigma^{n}, \bm \tau) = F_n^{*}(\bm \tau) \quad \quad \quad \forall \, \boldsymbol{\tau}\in \mathbf{S}.
\end{equation*}
\end{definition}
Notice that, for any $\bm \sigma \in \mathbf{S} \subset \mathbb{H}(\textrm{div},\Omega)$, the corresponding $i$th row of $\bm \sigma$ has only one nonzero component that is $\sigma_{ii}$.
Thus, recalling that 
$\mathbb{H}(\textrm{div},\Omega)$ is the space of tensor fields with rows belong to $\bm{H}(\textrm{div},\Omega)$, the diagonal components $\sigma_{ii}=\sigma_{jj}=\sigma$ (for all $i$ and $j$) belong to $H^1(\Omega)$ and satisfy $\langle\sigma\ \bm n,\bm v\rangle_{\partial\Omega}=0 \ \ \forall \bm v\in \bm H^1_{0,\Gamma_D}(\Omega)$. Hence, $\sigma_{ii}=\sigma_{jj}=\sigma \in H^1_{0,\Gamma_N}(\Omega)$. 

As a result, the reduced system is a Laplace problem, as stated in the following theorem.

\begin{theorem}[Laplacian Structure of the Continuous Reduced System] \label{thm:thm1}
Let (\ref{weak_IE}) be the semi-discrete formulation of the pseudo-stress Stokes problem in dimension $d$.
Then, the reduced system in Definition~\ref{reduced_cont} corresponds to a
Laplace problem, i.e., it takes the form: find $ \bm \sigma \in \mathbf{S}$ (equivalently $\sigma \in  H^1_{0,\Gamma_N}(\Omega)$) such that
\begin{eqnarray*}
     \alpha \sum_{i=1}^d a_{ii}(\sigma, \tau) =  \sum_{i=1}^d f_{ii}(\tau) \quad \quad \quad \forall \, \tau \in  H^1_{0,\Gamma_N}(\Omega).
\end{eqnarray*}
\end{theorem}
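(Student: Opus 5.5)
The plan is to substitute the structure of $\mathbf{S}$ from \eqref{spaceS} directly into the componentwise forms \eqref{component_form}. By definition of $\mathbf{S}$ and the remark after Definition~\ref{reduced_cont}, every $\bm\sigma\in\mathbf{S}$ has the form $\bm\sigma=\sigma\,\mathbb{I}_d$ with $\sigma\in H^1_{0,\Gamma_N}(\Omega)$. Conversely, any such $\sigma$ produces an element of $\mathbf{S}$. I will verify this converse first: the rows $\sigma\bm e_i$ lie in $\bm H(\mathrm{div},\Omega)$ because $\sigma\in H^1$. The traction constraint $\langle\sigma\bm n,\bm v\rangle_{\partial\Omega}=0$ for all $\bm v\in\bm H^1_{0,\Gamma_D}(\Omega)$ holds because $\sigma$ vanishes on $\Gamma_N$ while $\bm v$ vanishes on $\Gamma_D$. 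Hence $\tau\mapsto\tau\,\mathbb{I}_d$ is a bijection between $H^1_{0,\Gamma_N}(\Omega)$ and $\mathbf{S}$. Under this map the test space $\mathbf{S}$ in Definition~\ref{reduced_cont} corresponds exactly to $\tau\in H^1_{0,\Gamma_N}(\Omega)$, which justifies the parenthetical ``equivalently'' in the statement.

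The first computational step is to show that $\mathcal{M}(\bm\sigma,\bm\tau)=0$ for $\bm\sigma,\bm\tau\in\mathbf{S}$. This is immediate from $\mathbf{S}=\ker(\mathcal{M})$. As a check, it can also be read off \eqref{component_form}. With $\sigma_{kk}=\sigma$, $\tau_{ii}=\tau$ and vanishing off-diagonal entries, the diagonal sum gives $d(1-\frac1d)m(\sigma,\tau)=(d-1)m(\sigma,\tau)$. The cross term gives $-\frac1d\,d(d-1)m(\sigma,\tau)=-(d-1)m(\sigma,\tau)$. The off-diagonal mass terms vanish, so the total is $0$. This is the same cancellation as $\mathrm{dev}(\sigma\mathbb{I}_d)=0$.

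Next I will compute $\mathcal{A}$. In $\sum_{k,i,j}a_{ij}(\sigma_{ki},\tau_{kj})$, only the terms with $i=k$ and $j=k$ survive, because the off-diagonal entries vanish. This leaves $\sum_{k}a_{kk}(\sigma,\tau)$. Equivalently, $\nabla\cdot(\sigma\mathbb{I}_d)=\nabla\sigma$, so $\mathcal{A}=(\nabla\sigma,\nabla\tau)_\Omega$. The right-hand side is handled the same way: $F_n^*(\tau\mathbb{I}_d)=\sum_{i,j}f_{ij}(\tau_{ij})=\sum_i f_{ii}(\tau)$. Combining these three computations with the bijection above yields $\alpha\sum_i a_{ii}(\sigma,\tau)=\sum_i f_{ii}(\tau)$ for all $\tau\in H^1_{0,\Gamma_N}(\Omega)$. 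This is the weak Laplacian with homogeneous Dirichlet condition on $\Gamma_N$ and natural condition on $\Gamma_D$. It is well posed because $|\Gamma_N|>0$, so Poincaré's inequality applies.

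The main obstacle is not the algebra but the identification of $\mathbf{S}$ with $H^1_{0,\Gamma_N}(\Omega)$. Two points need care. First, a scalar $\sigma$ with $\sigma\bm e_i\in\bm H(\mathrm{div})$ for every $i$ has all partial derivatives in $L^2$, hence lies in $H^1$. Second, the duality condition $\langle \sigma\bm n,\bm v\rangle_{\partial\Omega}=0$ for all $\bm v\in\bm H^1_{0,\Gamma_D}(\Omega)$ must force $\sigma|_{\Gamma_N}=0$. For the second point, I would take $\bm v=\phi\,\bm e_i$ with $\phi$ an arbitrary $H^1$ function vanishing on $\Gamma_D$. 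This gives $\int_{\Gamma_N}\sigma\, n_i\,\phi=0$ for every $i$, and since $\sum_i n_i^2=1$, density of such traces yields $\sigma=0$ on $\Gamma_N$. The paper has already stated this identification just before the theorem, so I may take it as given and present the proof as the substitution computation above.
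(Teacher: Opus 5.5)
Your proof is correct and follows the paper's argument: substitute $\bm\sigma=\sigma\,\mathbb{I}_d$ and $\bm\tau=\tau\,\mathbb{I}_d$ into the componentwise forms \eqref{component_form}, note that the mass contributions $(d-1)\,m(\sigma,\tau)-(d-1)\,m(\sigma,\tau)$ cancel, and read off $\alpha\sum_i a_{ii}(\sigma,\tau)=\sum_i f_{ii}(\tau)$. You are more explicit than the paper in two places: it leaves the reductions of $\mathcal{A}$ and $F_n^*$ implicit, and in the text before the theorem it argues only the inclusion of $\mathbf{S}$ into $H^1_{0,\Gamma_N}(\Omega)$, not the converse you verify.
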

\begin{proof}
Recalling the representations (\ref{component_form}), the reduced system reads as follows: find $\sigma \in H^1_{0,\Gamma_N}(\Omega)$ such that
\begin{eqnarray*}
     \mu^{-1} \Big[ d \  \Big(1-\tfrac{1}{d}\Big) \ m( \sigma,  \tau) -  \tfrac{d(d-1)}{d} \ m( \sigma,  \tau)  \Big] + \alpha \sum_{i=1}^d a_{ii}( \sigma,  \tau) =  \sum_{i=1}^d f_{ii}( \tau) \ \ \ \forall \tau \in  H^1_{0,\Gamma_N}(\Omega),
\end{eqnarray*}
so we have $\alpha \sum_{i=1}^d a_{ii}( \sigma, \tau) =  \sum_{i=1}^d f_{ii}( \tau)$.
Hence, the reduced system corresponds to the standard variational formulation of a Laplace problem.
\end{proof}

\section{Fully-discrete formulation} \label{sec:fully_discrete}
We can now introduce a fully-discrete formulation by using a generic  spatial discretisation: let $\bm {V}_h$ be a suitable finite-dimensional space
approximating $\bm {V}$.  
The discrete space $\bm {V}_h$ is characterized by a set of local degrees of
freedom associated with mesh elements or control volumes, and by a polynomial
degree $p_\kappa \ge 1$ (possibly varying from element to element). Then, the fully-discrete approximation to \eqref{weak_pb} reads as follows: for any $n=1,\dots, N_T$ find $\bm \sigma_h^n \in \bm V_h$ such that
\begin{equation}\label{weak_dg}
   \begin{cases}
\mathcal{M}^h( \bm \sigma^{n}_h, \bm \tau_h) + \alpha \mathcal{A}^h (\bm \sigma^{n}_h, \bm \tau_h) =  F_n^{*,h}(\bm \tau_h) & \forall \, \bm \tau_h \in  \bm V_h,  \\
(\bm \sigma_h^0, \bm \tau_h) = (\bm \sigma_{0,h}, \bm \tau_h)  & \forall \, \bm \tau_h \in  \bm V_h,
   \end{cases}
\end{equation}
where $\bm \sigma_{0,h}$ is the  $\bm L^2$-projection of $\bm \sigma_0$ onto  $\bm V_h$. To obtain the corresponding algebraic formulation, we introduce a basis $\{ \phi_i,\  i=1, \dots ,N_h \}$, with $N_h = \dim(\bm V_h)$, for the space $\bm V_h$ and express $\boldsymbol{\sigma}_h$ as a linear combination of the basis functions $ \phi_i$.
We denote by $M$ (resp. $A$) the matrix representation of the bilinear form  $\mathcal{M}^h(\cdot,\cdot)$ (resp. $\mathcal{A}^h(\cdot, \cdot)$), and by $\bm f^{*}$ the vector representation of the linear functional $ F_n^{*,h}(\cdot)$. \textcolor{black}{Although $\bm f^{*}$ depends on $n$, we will focus on the solution of the linear
system at a fixed time step and therefore omit this dependence.} The algebraic formulation of \eqref{weak_dg} reduces to: given $\bm \sigma^0_h$, for any $n=1,\dots, N_T$ find $\bm \sigma_h^{n}$ such that
\begin{equation} \label{pb_study}
A^*  \bm \sigma_h^{n} =  \bm f^{*}.
\end{equation}
Here, we have introduced $A^* = M + \alpha A$, which corresponds to $\mathcal{A}^{*,h}(\cdot, \cdot) = \mathcal{M}^h(\cdot, \cdot) + \alpha \mathcal{A}^h(\cdot, \cdot)$, defining a coercive bilinear form. 

\subsection{Discrete reduced system}\label{reduced_discrete}
After establishing the Laplacian structure of the continuous reduced system in Section~\ref{reduced_continuous}, 
we now show that an analogous result holds for the fully-discrete formulation. In particular, in the discrete setting, the forms $\mathcal{M}(\cdot,\cdot)$, $\mathcal{A}(\cdot,\cdot)$ and $F_n^*(\cdot)$ are replaced by their discrete counterparts $\mathcal{M}^h(\cdot,\cdot)$, $\mathcal{A}^h(\cdot,\cdot)$ and $F_n^{*,h}(\cdot)$. Accordingly, we define the associated discrete bilinear and linear forms by $m^h(\cdot,\cdot)$, $a_{ij}^h(\cdot,\cdot)$, and $f_{ij}^h(\cdot)$,  which depend on the chosen spatial discretisation method.
\begin{definition}[Discrete Reduced System]\label{reduced_discr}
The discrete reduced system, obtained by restricting the fully-discrete problem (\ref{weak_dg}) to the subspace $\mathbf{S}^h = \ker(\mathcal{M}^h)$, reads as follows:
given $\bm \sigma^0_h$, for any $n=1,\dots, N_T$ find $\bm \sigma^{n}_h\in \mathbf{S}^h$ such that
\begin{equation*}
\mathcal{M}^h( \bm \sigma^{n}_h, \bm \tau_h) + \alpha  \mathcal{A}^h (\bm \sigma^{n}_h, \bm \tau_h) = F_n^{*,h}(\bm \tau_h) \quad \quad \quad \forall \, \boldsymbol{\tau}_h\in \mathbf{S}^h.
\end{equation*}
\end{definition}
The next theorem is the discrete counterpart of Theorem \ref{thm:thm1}.

\begin{theorem}[Laplacian Structure of the Discrete Reduced System]
\label{thm:thm2}
Let (\ref{weak_dg}) be 
in dimension $d$. Then, the discrete reduced system of 
Definition~\ref{reduced_discr} corresponds to a discrete Laplacian problem:
$$
    \alpha \sum_{i=1}^d a_{ii}^h(\sigma_h, \tau_h)
    = \sum_{i=1}^d f_{ii}^h(\tau_h),
$$
where $a_{ii}^h(\cdot,\cdot)$ and $f_{ii}^h(\cdot)$ denote the discrete 
bilinear and linear forms associated with the chosen spatial 
discretisation method.
\end{theorem}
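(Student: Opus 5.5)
The plan follows the proof of Theorem~\ref{thm:thm1}, with the continuous forms replaced by their discrete counterparts. The essential assumption is that the discretisation keeps the componentwise structure (\ref{component_form}). Concretely, $\mathcal{M}^h$, $\mathcal{A}^h$ and $F_n^{*,h}$ are built from scalar forms $m^h$, $a_{ij}^h$, $f_{ij}^h$ acting on a common scalar space $W_h$, with $\bm V_h$ consisting of tensors whose entries lie in $W_h$. This means
\[
\mathcal{M}^h(\bm\sigma_h,\bm\tau_h)=\mu^{-1}\Big[\sum_{k}\big(1-\tfrac1d\big)m^h(\sigma_{kk},\tau_{kk})+\sum_{k\neq i}\big(-\tfrac1d m^h(\sigma_{kk},\tau_{ii})+m^h(\sigma_{ki},\tau_{ki})\big)\Big],
\]
and analogously for $\mathcal{A}^h$ and $F_n^{*,h}$. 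I would state this explicitly as the standing hypothesis on the spatial discretisation. It holds for standard CG and DG tensor-product spaces.

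\textbf{Step 1: characterise $\mathbf{S}^h=\ker(\mathcal{M}^h)$.} I want to show that it consists exactly of $\bm\sigma_h=\sigma_h\mathbb{I}_d$ with $\sigma_h\in W_h$.

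For the inclusion ``$\supseteq$'', insert $\sigma_{ii}=\sigma_h$ and $\sigma_{ki}=0$ for $k\neq i$ into the form above. The diagonal contribution is $(1-\tfrac1d)\sum_k m^h(\sigma_h,\tau_{kk})-\tfrac1d\sum_{k\neq i}m^h(\sigma_h,\tau_{ii})$. The second sum contains each $i$ exactly $d-1$ times, so the two parts cancel. The off-diagonal contribution vanishes because $\sigma_{ki}=0$.

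For the reverse inclusion, assume $m^h$ is symmetric positive definite on $W_h$. Then $\mathcal{M}^h(\bm\sigma_h,\bm\sigma_h)=\mu^{-1}\|\mathrm{dev}(\bm\sigma_h)\|_{m^h}^2$, and this vanishes only when $\mathrm{dev}(\bm\sigma_h)=0$, i.e.\ when $\bm\sigma_h$ is a multiple of the identity. Note that any boundary or essential constraints defining $\bm V_h$ carry over to $\sigma_h$, which is the discrete analogue of $\sigma\in H^1_{0,\Gamma_N}(\Omega)$.

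\textbf{Step 2: restrict the system.} Take $\bm\sigma_h=\sigma_h\mathbb{I}_d$ and $\bm\tau_h=\tau_h\mathbb{I}_d$ in Definition~\ref{reduced_discr}. By Step~1, the $\mathcal{M}^h$ term equals $\mu^{-1}[d(1-\tfrac1d)-\tfrac{d(d-1)}{d}]m^h(\sigma_h,\tau_h)=0$.

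In $\mathcal{A}^h(\bm\sigma_h,\bm\tau_h)=\sum_{k,i,j}a^h_{ij}(\sigma_{ki},\tau_{kj})$, only $i=k$ and $j=k$ survive, which leaves $\sum_k a^h_{kk}(\sigma_h,\tau_h)$. In the same way, $F_n^{*,h}(\bm\tau_h)=\sum_i f^h_{ii}(\tau_h)$. This gives the stated identity. Finally, $\sum_i a^h_{ii}$ is the discrete Laplacian of the chosen scheme; for DG it includes the consistency and penalty face terms for each direction.

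\textbf{Main obstacle.} The delicate point is not the algebra but justifying the componentwise decomposition of the discrete forms. For DG this requires checking that the face and penalty terms of $\mathcal{A}^h$ split as $\sum a^h_{ij}(\sigma_{ki},\tau_{kj})$ row by row; otherwise the cross terms with $i\neq j$ would not reduce cleanly. I would first verify this for the symmetric interior penalty form and then argue generically.
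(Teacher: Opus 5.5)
Your proposal is correct and follows essentially the same route as the paper, whose proof simply transfers the argument of Theorem~\ref{thm:thm1}. That argument substitutes the scalar-identity fields $\sigma_h\mathbb{I}_d$ and $\tau_h\mathbb{I}_d$ into the componentwise forms, observes that the $m^h$ coefficient $d(1-\tfrac1d)-\tfrac{d(d-1)}{d}$ vanishes, and reads off $\alpha\sum_i a^h_{ii}(\sigma_h,\tau_h)=\sum_i f^h_{ii}(\tau_h)$. Your two additions, namely the explicit hypothesis of a common scalar space with the componentwise splitting of $\mathcal{M}^h$, $\mathcal{A}^h$, $F_n^{*,h}$, and the proof that $\ker(\mathcal{M}^h)$ consists exactly of $\sigma_h\mathbb{I}_d$, make precise what the paper leaves implicit in the block/Kronecker structure it assumes for $M$, $A$ and $V$.
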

\begin{proof}
    The proof follows directly from that of Theorem~\ref{thm:thm1}, with adaptations to the discrete setting.
\end{proof}

We now consider the algebraic formulation corresponding to Theorem~\ref{thm:thm2}. Let $V\in \mathbb{R}^{N_h \times m}$ be a matrix whose columns form a basis for $\mathbf{S}^h$. Then, for a generic spatial dimension $d$, the kernel of the matrix $M$ is the span of the vectors obtained by a vectorization (by row) of the matrices $\bm \sigma_h$ having an identity block in correspondence
with each diagonal
component $\sigma_{ii}$ of the pseudo-stress tensor, and zero blocks elsewhere. Given the subset $\mathcal{I}_d = \{ j \in \mathbb{N} \ | \ j = (i-1)d + i \ \  \text{ for} \ i = 1,\dots,d \}$, the identity matrix 
$I_d \in \mathbb{R}^{N_h/d^2\times N_h/d^2}$, and a vector $\bm v \in \mathbb{R}^{d^2}$, we define 
\begin{equation} \label{Vmatrix}
\textcolor{black}{V = 
\frac{1}{\sqrt{d}}\,
(\bm v \otimes I_d), \quad \mathrm{span}(V) = \ker(M),} \quad \text{with} \quad v_j =
\begin{cases}
1 & \text{if } j \in \mathcal{I}_d, \\
0 & \text{otherwise},
\end{cases}
\end{equation}
\textcolor{black}{where $\mathrm{span}(V)$ is understood as the span of the columns of the matrix $V$.}
 In order to characterise the algebraic structure of the matrix that represents the reduced system, i.e. $V^\top A^* V$, we begin by illustrating the structure of the algebraic problem~(\ref{pb_study}) in the cases $d=2$ and $d=3$. 
 
 For $d=2$, the pseudo-stress tensor $\bm \sigma \in \mathbb{R}^{2 \times 2}$ is represented by its four components, which we collect into the vector of unknowns $
{\bm \sigma}=
\begin{bmatrix}
{\sigma}_{11} \ \ 
{\sigma}_{12} \ \  
{\sigma}_{21} \ \ 
{\sigma}_{22}
\end{bmatrix}^\top.$
Then, the matrices $M$ and $A$ become
{\setlength{\arraycolsep}{3pt}
 \renewcommand{\arraystretch}{0.7} \begin{align*} \label{matrices2D}
M = \frac{1}{2} \begin{bmatrix}
    1 & 0 & 0 & -1 \\
    0 & 2 & 0 & 0 \\
    0 & 0 & 2 & 0 \\
    -1 & 0 & 0 & 1 \\
\end{bmatrix} \otimes M_1, \quad 
A = \begin{bmatrix}
    C_2 &  0 \\
     0 & C_2 \\
\end{bmatrix} \quad \text{where} \ \ 
C_2 = \begin{bmatrix}
    B_{11} & B_{12} \\[0.5em]
    B_{12}^\top & B_{22} \\
\end{bmatrix},
\end{align*}}
where $M_1, B_{ij} \in \mathbb{R}^{N_h/4\times N_h/4}, \text{ for } i,j \in \{1,2\}$, are defined according to the chosen numerical discretisation method.
In particular, the blocks $B_{ij}$ are directly derived from the components of the discrete form $a^h_{ij}(\cdot,\cdot)$ for $i,j \in \{1,2\}$. 

We now study the case $d=3$: the pseudo-stress tensor $\bm{\sigma} \in \mathbb{R}^{3 \times 3}$, and the vector of unknowns is assembled by collecting all nine components into a single column vector $
\bm{\sigma} =
\begin{bmatrix}
\sigma_{11}  \ \  
\sigma_{12} \ \   
\sigma_{13} \ \  
\sigma_{21} \ \  
\sigma_{22} \ \  
\sigma_{23} \ \  
\sigma_{31} \ \  
\sigma_{32} \ \  
\sigma_{33}
\end{bmatrix}^\top. $
For $d=3$, the matrices $M$ and $A$ become
{\setlength{\arraycolsep}{3pt}
 \renewcommand{\arraystretch}{0.5} \begin{eqnarray*}
M  = \frac{1}{3}
\begin{bmatrix} 
2 & 0 & 0 & 0 & -1 & 0 & 0 & 0 & -1 \\
0 & 3 & 0 & 0 & 0 & 0 & 0 & 0 & 0 \\
0 & 0 & 3 & 0 & 0 & 0 & 0 & 0 & 0 \\
0 & 0 & 0 & 3 & 0 & 0 & 0 & 0 & 0 \\
-1 & 0 & 0 & 0 & 2 & 0 & 0 & 0 & -1 \\
0 & 0 & 0 & 0 & 0 & 3 & 0 & 0 & 0 \\
0 & 0 & 0 & 0 & 0 & 0 & 3 & 0 & 0 \\
0 & 0 & 0 & 0 & 0 & 0 & 0 & 3 & 0 \\
-1 & 0 & 0 & 0 & -1 & 0 & 0 & 0 & 2
\end{bmatrix} \otimes  M_1, \ \ 
A  =  \begin{bmatrix}
C_3 & 0 & 0 \\
0 & C_3 & 0 \\
0 & 0 & C_3
\end{bmatrix}  \ \text{and} \ \  C_3 = \begin{bmatrix}
B_{11} &  B_{12} &  B_{13} \\[0.5em]
B_{12}^\top &  B_{22} &  B_{23} \\[0.5em]
B_{13}^\top &  B_{23}^\top &  B_{33}
\end{bmatrix},
\end{eqnarray*}}
where $M_1, B_{ij}
\in \mathbb{R}^{N_h/9 \times N_h/9}, \text{ for } i,j \in \{1,2,3 \}$  are defined, again, according to the chosen numerical discretisation method. In particular, the blocks $B_{ij}$ are directly derived from the discrete forms $a^h_{ij}(\cdot,\cdot)$ for $i,j \in \{1,2,3\}$.
We can now characterise the algebraic structure of $V^\top A^* V$, i.e. the matrix that represents the reduced system in dimension~$d$.

\begin{corollary}[Reduced Matrix Structure] \label{corol_d}
Let (\ref{pb_study}) be the algebraic formulation of the pseudo-stress Stokes problem. Let $B_{ii}$ be the block of the matrix $A$ corresponding to the diagonal
component $\sigma_{ii}$ of the pseudo-stress tensor, and let $V$ be defined as in (\ref{Vmatrix}). Then, the matrix
representing the reduced problem has the expression
$V^\top A^* V = \frac{\alpha}{d}\sum_{i=1}^d B_{ii}$,
where $\sum_{i=1}^d B_{ii}$ coincides with the stiffness matrix of the discrete Laplacian in dimension $d$.
\end{corollary}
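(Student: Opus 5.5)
The plan is a direct block computation. We use the Kronecker structure of $V$ and the block forms of $M$ and $A$ (written out above for $d=2,3$, and extending in the same way to general $d$). Write $A^* = M + \alpha A$, so that $V^\top A^* V = V^\top M V + \alpha V^\top A V$. We treat the two terms separately, using that $V = \frac{1}{\sqrt d}(\bm v\otimes I_d)$. Here $I_d$ denotes the identity of size $N_h/d^2$, and $\bm v\in\mathbb{R}^{d^2}$ is the indicator of the diagonal positions $\mathcal{I}_d=\{(i-1)d+i\}$.

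\emph{The mass term.} Write $M = \frac{1}{d}K_d\otimes M_1$, where the $d^2\times d^2$ matrix $K_d$ has the following entries: $d-1$ on diagonal positions in $\mathcal{I}_d$, $-1$ between distinct positions of $\mathcal{I}_d$, $d$ on off-diagonal tensor components, and $0$ otherwise. By the mixed-product rule for Kronecker products,
\[
V^\top M V = \frac{1}{d^2}(\bm v^\top K_d \bm v)\,M_1 .
\]
The scalar $\bm v^\top K_d\bm v$ is the sum of the $d\times d$ submatrix of $K_d$ indexed by $\mathcal{I}_d$, namely $d(d-1) - d(d-1) = 0$. Equivalently, $K_d\bm v=0$, since each row of that submatrix sums to $(d-1)-(d-1)=0$. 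This is exactly the algebraic counterpart of the cancellation in the proof of Theorem~\ref{thm:thm1}, and it confirms $\mathrm{span}(V)\subseteq\ker(M)$.

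\emph{The stiffness term.} The matrix $A$ is block diagonal, with $d$ copies of $C_d$, one for each row $k$ of the tensor. Its block in position $((k,i),(k,j))$ is $B_{ij}$ (or $B_{ji}^\top$), and blocks coupling different rows $k\neq k'$ vanish. Applying $V$ selects only the components $(k,k)$. The block $((k,k),(k',k'))$ of $A$ is therefore zero for $k\neq k'$ and equals $B_{kk}$ for $k=k'$. Hence
\[
V^\top A V = \frac{1}{d}\sum_{k=1}^d B_{kk},
\]
and combining the two terms gives $V^\top A^*V = \frac{\alpha}{d}\sum_{i=1}^d B_{ii}$.

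For the final claim, $B_{ii}$ is by construction the matrix of $a^h_{ii}(\cdot,\cdot)$ on the scalar discrete space. Theorem~\ref{thm:thm2} states that $\sum_i a^h_{ii}$ is the discrete Laplacian form, so $\sum_i B_{ii}$ is its stiffness matrix.

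The main obstacle is bookkeeping for general $d$. The paper only displays $M$ and $A$ for $d=2,3$, so I would first state explicitly the general pattern of $K_d$ and of $A=I_d\otimes C_d$ implied by the componentwise forms (\ref{component_form}). With that pattern fixed, both computations above reduce to reading off entries indexed by $\mathcal{I}_d$. Care is also needed with the normalisation $1/\sqrt d$, which is the source of the factor $1/d$ in the statement.
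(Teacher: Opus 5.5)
Your proposal is correct and follows the paper's proof. You split $V^\top A^*V = V^\top M V + \alpha\,V^\top A V$, eliminate the mass term, and read off the surviving diagonal blocks $B_{kk}$ from the block-diagonal structure of $A$; the only difference is that you verify $V^\top M V = 0$ explicitly via $K_d\bm v = 0$, where the paper takes $\mathrm{span}(V)=\ker(M)$ as part of the definition of $V$, which makes your argument slightly more self-contained.
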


\begin{proof}
\textcolor{black}{Since $\mathrm{span}(V) = \ker(M)$, by definition~\eqref{Vmatrix}, we have that $V^\top M V = 0$, and therefore $
V^\top A^* V = V^\top (M + \alpha A) V = \alpha\, V^\top A V.$ We now compute $V^\top A V$ explicitly. Recall that $V$ 
contains an identity block in correspondence with each diagonal
component $\sigma_{ii}$, $i=1,\dots,d$, and zero blocks elsewhere,
and that $A$ has a block-diagonal structure with $d$ copies of 
$C_d$ on the block diagonal. Then, the $i$-th identity block in $V^\top$
extracts the $i$-th block-row of the $i$-th copy of $C_d$.  
This yelds
\begin{equation*}
  V^\top A = \frac{1}{\sqrt{d}} \ 
  \bigl[\,
    \underbrace{B_{11} \ \cdots \ B_{1d}}_{\text{1st block-row of }C_d} 
    \quad \cdots \quad
    \underbrace{B_{d1} \ \cdots \ B_{dd}}_{d\text{-th block-row of }C_d}
  \,\bigr],
\end{equation*}
where $B_{ij} = B_{ji}^\top$ for $i \neq j$. Multiplying on the right by $V$, all off-diagonal blocks 
$B_{ij}$ with $i \neq j$ are annihilated. Therefore, only the diagonal blocks $B_{ii}$ survive, 
and we obtain $
  V^\top A^* V = \frac{\alpha}{d}\sum_{i=1}^{d} B_{ii}.
$}
\end{proof}
For $d=2$, we have that $ 
V =   \frac{1}{\sqrt{2}}
\begin{bmatrix}
1 \ \
0 \ \
0 \ \
1
\end{bmatrix}^\top \otimes I_2,$
where $I_2 \in \mathbb{R}^{\frac{N_h}{4}\times\frac{N_h}{4}}$ denotes the identity matrix. While the matrix representing the reduced system satisfies $
V^\top A^* V
=
\frac{\alpha}{2}\,(B_{11} + B_{22}).$
Similarly, for $d=3$ we have that $V = \frac{1}{\sqrt{3}}
\begin{bmatrix}
1 \ \
0 \ \
0 \ \
0 \ \
1 \ \
0 \ \
0 \ \
0 \ \
1 
\end{bmatrix}^\top \otimes  I_3, $ where $I_3$ now denotes the identity matrix of size $N_h/9$. Moreover, we obtain that the matrix of the reduced system has the following expression: $
V^\top A^* V
=
\frac{\alpha}{3}\,(B_{11} + B_{22} + B_{33}).$

\subsection{Examples of spatial discretisation methods} \label{spatial_discr}

The fully-discrete formulation \eqref{weak_dg} has been introduced in an
abstract form, without specifying the spatial discretisation. This allows the
framework to accommodate different numerical methods for the approximation of
the spatial operators. In this section, we illustrate two possible spatial discretisations fitting
the abstract setting \eqref{weak_dg}. In particular, we consider a
continuous Galerkin finite element method and a discontinuous Galerkin method on
polytopal meshes (PolyDG) (see \cite{CangianiDongGeorgoulisHouston_2017, CangianiHoustonArticle} for further details). In each case,
we define the corresponding discrete space $\boldsymbol{V}_h$ and the
associated discrete bilinear and linear forms $m^h(\cdot,\cdot)$,
$a^h_{ij}(\cdot,\cdot)$ and $f^h_{ij}(\cdot)$.  In the following, we use the short-hand notation $(\cdot,\cdot)_{\mathcal{T}_h} = \sum_{\kappa\in \mathcal{T}_h}\int_{\kappa} \cdot$ and $ \langle \cdot,\cdot \rangle_{\mathcal{F}_h} = \sum_{F\in \mathcal{F}_h}\int_{F} \cdot$.

\subsubsection{Conforming Galerkin discretisation} 
In this section, we present the finite element framework used for a conforming Galerkin discretisation of the problem.
Let $\mathcal{T}_h$ be a conforming triangulation 
of the domain $\Omega$, which means that there exists a constant $C>0$ such that $
\sup_{\kappa \in \mathcal{T}_h} h_\kappa \le h, \ 
\sup_{\kappa \in \mathcal{T}_h} \frac{h_\kappa}{\rho_\kappa} \le C,$
where $h_\kappa$ and $\rho_\kappa$ denote, respectively, the diameter and the inradius of
the simplex $\kappa$.
For an integer $\ell \ge 0$, we denote by $\mathbb{P}_\ell(\kappa)$ the space of real
polynomials of degree at most $\ell$ on $\kappa$, and by $\mathrm{RT}_\ell(\kappa)$ the 
Raviart--Thomas space \cite{Ndlec1980MixedFE, raviartmixed} of order $\ell$ on $\kappa$.  
Analogously, $\mathrm{BDM}_{\ell}(\kappa)$ denotes the Brezzi--Douglas--Marini space \cite{BrezziFortin, BrezziMarini}
of order $\ell$ on $\kappa$. Using these local spaces, we can introduce the conforming approximation 
$\bm V_h \subset \bm V$ 
defined as
\[
\boldsymbol V_h 
= \big\{ \boldsymbol v_h \in \boldsymbol V 
     \,:\, \boldsymbol v_h|_\kappa \in \mathrm{RT}_\ell(\kappa) \ 
     \text{ or } \ \mathrm{BDM}_{\ell+1}(\kappa),
     \ \forall\, \kappa \in \mathcal{T}_h \big\}.
\]
Independently of the specific finite element space adopted, the discrete 
variational problem retains the same structure as the continuous one.  
In particular, the corresponding fully-discrete approximation has the form \eqref{weak_dg}, where 
\begin{eqnarray*}
    m^h(\sigma, \tau)  =  ( \sigma, \tau)_{\mathcal{T}_h},  \quad
    \quad a^h_{ij}(\sigma, \tau) =  (\partial_i \sigma, \partial_j \tau)_{\mathcal{T}_h}, \quad \quad
    f^h_{ij}(\tau) & = & (F^*_{ij}, \tau)_{\mathcal{T}_h}.
\end{eqnarray*}

\begin{remark}
    \textcolor{black}{The well-posedness and error analysis of conforming Galerkin 
  discretisations based on  RT and BDM finite elements 
  for the pseudo-stress formulation of Stokes-type problems have been 
  addressed in~\cite{Cai2009, Cai2007}.}
\end{remark}

\subsubsection{PolyDG discretisation}\label{sec:polydg}
We now consider the PolyDG method, a discontinuous Galerkin scheme defined on general polygonal or polyhedral meshes (see~\cite{CangianiDongGeorgoulisHouston_2017} for a comprehensive monograph). 
%
\textcolor{black}{A detailed stability and convergence analysis of the PolyDG approximation for the pseudo-stress formulation can be found in \cite{Cancrini}; see also Remark~\ref{remark1} for the assumptions and error estimates relevant to the present work.}
Let $\mathcal{T}_h$ be a polytopal mesh of the domain $\Omega$, i.e.,   $\mathcal{T}_h = \textcolor{black}{\bigcup_{\kappa} \kappa}$, being  $\kappa$ a general polygon ($d = 2$) or polyhedron ($d = 3$). 
Given a polytopal element $\kappa$, we define by $\left|  \kappa \right|$ its measure and by \textcolor{black}{$h_\kappa$} its diameter, and set \textcolor{black}{$h = \max_{\kappa \in \mathcal{T}_h} h_\kappa$}.
 We let a polynomial degree \textcolor{black}{$p_\kappa \ge 1$} be associated with each element $\kappa \in \mathcal{T}_h$ and we denote by $p_h : \mathcal{T}_h \rightarrow \mathbb{N^*} = \{n \in \mathbb{N} : n \ge 1\}$ the piecewise constant function such that $(p_h)|_{\kappa} = \textcolor{black}{p_\kappa}$. Then, we define the discrete space
$\boldsymbol{V}_h = [\textit{P}_{p_h} (\mathcal{T}_h)]^{d\times d}, \ \text{where} \ \ \textit{P}_{p_h} (\mathcal{T}_h) = \Pi_{\kappa \in \mathcal{T}_h} \mathbb{P}_{p_\kappa} (\kappa),$ 
and $\mathbb{P}_{\ell} (\kappa)$ is the space of piecewise polynomials in $\kappa$ of total degree less than or equal to $\ell \geq 1.$ 
We refer the reader to \cite{Cancrini, Cangiani2022, CangianiDongGeorgoulisHouston_2017} for the main assumptions on $\mathcal{T}_h$.
 We define an interface as the intersection of the ($d - 1$)-dimensional faces of any two neighboring
elements of $\mathcal{T}_h$. In two dimensions ($d=2$), the interfaces of any element $\kappa \in \mathcal{T}_h$ are line segments, i.e., $(d-1)$-dimensional simplexes. In three dimensions ($d=3$), this will not be the case; we assume that each interface of an element $\kappa \in \mathcal{T}_h$ can be partitioned into a collection of co-planar triangles. Accordingly, we use the term ``face'' to denote a $(d-1)$-dimensional simplex (line segment for $d=2$ or triangle for $d=3$) belonging to the boundary of $\kappa$.
We also decompose the set of faces as $\mathcal{F} = \mathcal{F}_h^I \cup \mathcal{F}_h^D  \cup \mathcal{F}_h^N$, where $\mathcal{F}_h^I$ contains the internal faces and $\mathcal{F}_h^D$ and $\mathcal{F}_h^N$ the faces of the Dirichlet and Neumann boundary conditions, respectively.
 Finally, for sufficiently piecewise smooth vector- and tensor-valued fields $\bm{v}$ and $\bm{\tau}$, respectively, and for any pair of neighbouring elements $\kappa^+$ and $\kappa^-$ sharing a face $F\in \mathcal{F}_h^I$,  we introduce the jump and average operators $ [[ \bm{v} ]]  = \bm{v}^+\otimes\bm{n}^++\bm{v}^-\otimes\bm{n}^-, [[\boldsymbol{\tau}]] = \boldsymbol{\tau}^+ \boldsymbol{n}^+ +  \boldsymbol{\tau}^- \mathbf{n}^-, \
 \{\!\{\bm{v}\}\!\} = \frac{(\bm{v}^+ + \  \bm{v}^-)}{2}, \{\!\{\boldsymbol{\tau}\}\!\} = \frac{(\boldsymbol{\tau}^+  + \  \boldsymbol{\tau}^-)}{2},$
where $\otimes$ is the tensor product in $\mathbb{R}^d$, $\cdot^{\pm}$ denotes the trace on $F$ taken within $\kappa^\pm$, and $\bm{n}^\pm$ is the outer normal vector to $\partial \kappa^\pm$. 
Accordingly, on boundary faces  $F \in \mathcal{F}_h^D  \cup \mathcal{F}_h^N$, 
we set $[[\bm{v}]] = \bm{v}\otimes\bm{n}, \,\,
[[\boldsymbol{\tau}]] = \boldsymbol{\tau} \boldsymbol{n}, \,\, {\rm and }  \,\,
\{\!\{\bm{v}\}\!\} =  \bm{v}, \,\,
\{\!\{\bm{\tau}\}\!\} =  \bm{\tau}.$
In the following, we use $\nabla_h \cdot$ to denote the element-wise divergence operator.
The corresponding fully-discrete PolyDG approximation has the form \eqref{weak_dg}, where in this case we define
\begin{eqnarray*}
    m^h(\sigma, \tau) & = & ( \sigma, \tau)_{\mathcal{T}_h},  \qquad f^h_{ij}(\tau)  =  (F^*_{ij}, \tau)_{\mathcal{T}_h}, \\ 
    \quad a^h_{ij}(\sigma, \tau) & = & (\partial_i \sigma, \partial_j \tau)_{\mathcal{T}_h} - \langle \{\!\{\partial_i \sigma\}\!\} , [[\tau]] \rangle_
    {\mathcal{F}_h^{I,N}} \\& & - \langle \{\!\{\partial_j \tau\}\!\} , [[\sigma]] \rangle_
    {\mathcal{F}_h^{I,N}}    
    + \langle \gamma_e[[\sigma]] , [[\tau]] \rangle_{\mathcal{F}_h^{I,N}}.
\end{eqnarray*}
Here, $\mathcal{F}_h^{I,N} = \mathcal{F}_h^{I} \cup \mathcal{F}_h^{N}$ and the stabilization function $\gamma_e: \mathcal{F}_h^{I,N}\rightarrow\mathbb{R}_+$ is defined as a function of the penalty coefficient $\alpha^*>0$ as follows:
\begin{equation*}\label{def:penalty}
    \gamma_e(\bm x) = \begin{cases}
        \alpha^* \max_{\kappa \in \{ \kappa^+,\kappa^-\}} \frac{p_\kappa^2}{h_\kappa}, & \bm x \in e, e \in \mathcal{F}^I_h, e \subset \partial \kappa^+ \cap \partial \kappa^-,  \\
        \alpha^* \frac{p_\kappa^2}{h_\kappa}, & \bm x \in e, e \in \mathcal{F}^N_h, e \subset \partial \kappa^+ \cap \partial \Gamma_N.
    \end{cases}
\end{equation*}

{\begin{remark}\label{remark1}
    Let us recall the main assumptions and results from the error analysis of the fully-discrete PolyDG scheme \cite{Cancrini}. Let the exact solution $\bm \sigma$ be sufficiently smooth and $\bm \sigma_h^{n}$ be the solution of \eqref{pb_study} with $\alpha = \Delta t$ (implicit Euler) and for a sufficiently large penalty coefficient $\alpha^*$. If $(h, p)$ are quasi uniform, the error in the energy norm satisfies $\| \bm \sigma -\bm \sigma_h \|_{E} \sim \Delta t + h^p$. 
\end{remark}

\section{Numerical solvers} \label{sec:solvers}

\textcolor{black}{In this section, we present a numerical framework for the solution of the linear system~\eqref{pb_study}.
Since $A^*$ is symmetric and 
positive definite, the CG method is the natural choice. However, as 
discussed in Section~\ref{reduced_continuous}, the conditioning of $A^*$ 
deteriorates as $\Delta t \to 0$, motivating the use of a deflated 
CG strategy that projects out the directions in $\ker(M)$ and 
stabilises the iteration count~\cite{CancriniCiaramella}. The structure of the resulting inner system naturally 
motivates the use of a multigrid solver, where a RAS smoother exploits the 
block structure of the discretisation. Since the inner system is solved 
only approximately, the deflation projection applied at each outer 
iteration is itself inexact, and the corresponding correction varies 
from one iteration to the next depending on the inner solver 
tolerance. As a result, if the tolerance of the inner solver is not appropriately related to the outer residual, the 
classical CG method is no longer guaranteed to be robust. This 
motivates the use of a flexible conjugate gradient algorithm, 
specifically designed to accommodate variable system matrices.}
In summary, our solution strategy combines deflation to achieve robustness with respect to $\Delta t$, a multigrid method to efficiently solve the inner system, and a flexible CG scheme to ensure stable convergence of the overall algorithm. In Sections~\ref{review_deflated} - \ref{sec_fcg}, we introduce the main building blocks of the proposed solver: the deflated conjugate gradient method, the inner linear system and its multigrid solution strategy, and the flexible CG framework. The corresponding algorithms are presented in Section~\ref{num_algo}.

\subsection{The deflated conjugate gradient method} \label{review_deflated}

Consider a linear system of equations of the form (\ref{pb_study}), 
where $A^* \in \mathbb{R}^{N_h \times N_h}$ is symmetric and positive definite and $\bm \sigma_h^{n}, \ \bm f^{*} \in \mathbb{R}^{N_h}$. 
To simplify the notation, we denote by $\bm x$ the solution $\bm \sigma_h^{n}$ of the linear system.  
\textcolor{black}{Since the speed of convergence of CG depends on the distribution of the eigenvalues of $A^*$, the idea of deflation \cite{Kahl, Nicolaides} is to ``hide'' parts of the spectrum of $A^*$ from CG such that the CG iteration operates on an equivalent system with a significantly reduced condition number compared to 
the original one.} The spectral components that are hidden depend on the chosen deflation subspace $\mathbf{S} \subset  \mathbb{R}^{N_h}$, and the resulting improvement in convergence speed is entirely governed by this choice. 
Given $\mathbf{S}^{\perp_{A^*}}=(A^*\mathbf{S})^\perp$ the $A^*-$orthogonal complement of $\mathbf{S}$, it is possible to split the solution $\bm x$ into a component in $\mathbf{S}$ and a component in $\mathbf{S}^{\perp_{A^*}}$ via the $A^*-$orthogonal projection $\pi_{A^*}(\mathbf{S}) \in \mathbb{R}^{N_h \times N_h}$ onto $\mathbf{S}$. Given a matrix $V\in \mathbb{R}^{N_h \times m}$ whose columns form a basis for $\mathbf{S}$, and $\pi_{A^*}(\mathbf{S}) = V(V^\top A^*V)^{-1}V^\top A^*$, we obtain 
\begin{align}\label{x_defl}
   \bm x = (I-\pi_{A^*}(\mathbf{S})) \ \hat{\bm x} + V(V^\top A^*V)^{-1}V^\top  \bm f^{*},
\end{align}
where $\hat{\bm x}$ is the solution of the so-called \textit{deflated linear system}, i.e.
\begin{equation} \label{defl_sys}
    A^*(I-\pi_{A^*}(\mathbf{S})) \ \hat{\bm x} = (I-\pi_{A^*}(\mathbf{S}))^\top \bm f^{*}.
\end{equation}
Thus, to obtain $\bm x$, one needs to compute the solution $\hat{\bm x}$ of the deflated system. \textcolor{black}{We note that $A^*(I-\pi_{A^*}(\mathbf{S})) = (I-\pi_{A^*}(\mathbf{S}))^\top A^*(I-\pi_{A^*}(\mathbf{S}))$, as shown in \cite{Kahl}. Then the matrix $A^*(I-\pi_{A^*}(\mathbf{S}))$ is symmetric positive semi-definite and so we can apply the CG method.} \textcolor{black}{Matrix singularity poses no obstacle to the standard CG iteration, provided that (\ref{defl_sys}) is consistent, i.e., the right hand side $(I-\pi_{A^*}(\mathbf{S}))^\top\bm f^{*}$ is in the range of $A^*(I-\pi_{A^*}(\mathbf{S}))$,  which has been 
verified in \cite[Lemma~2.1]{Kahl}.}
To summarise, one interprets deflated CG as the standard CG algorithm applied to the deflated system (\ref{defl_sys}).
\textcolor{black}{Let $\lambda_1 \ge \cdots \ge \lambda_n \ge 0$ denote the eigenvalues of the self-adjoint and positive semi-definite matrix $A^*(I-\pi_{A^*}(\mathbf{S}))$, and $k \in \mathbb{N}$ the largest index such that $\lambda_k \neq 0$. Then the errors of the CG iterates satisfy
\[
\|e_i\|_{A^*} \le 2 \left( \frac{\sqrt{\kappa_{\mathrm{eff}}}-1}{\sqrt{\kappa_{\mathrm{eff}}}+1} \right)^i \|e_0\|_{A^*},
\quad i = 0,1,2,\dots,
\]
where $\kappa_{\mathrm{eff}} = \lambda_1 / \lambda_k$ is the effective condition number of the deflated matrix $A^*(I-\pi_{A^*}(\mathbf{S}))$, in contrast to the condition number $\kappa$ of $A^*$ \cite{Kahl}.}

Note that applying the deflated CG method requires computing the inverse of $V^\top A^* V$ in (\ref{x_defl}) in the definition of $\pi_{A^*}(\mathbf{S})$. 
In particular, in the deflated CG method, each iteration requires solving the \textit{inner linear system}
\begin{align}\label{inner}
  (V^\top A^*V) \ \bm z_{k+1} = V^\top A^* \bm r_{k+1},
\end{align}
where $\bm r_{k+1}$ is the current residual, and $\bm z_{k+1}$ is the solution of the 
inner system. When the dimension of the subspace $\mathbf{S}$ is large, solving the inner problem~(\ref{inner}) exactly may become too costly. In such cases, it is reasonable to approximate its solution by means of an iterative solver.
Previous analyses~\cite{Kahl, Nabben2006} have shown that deflation methods may react strongly to the level of accuracy achieved in the inner computation. The outer iteration is terminated once
\[
\|\boldsymbol r_i\|_2 \le \tau\, \|\boldsymbol f^*\|_2 = \varepsilon,
\qquad 0 < \tau \ll 1,
\]
and we seek an analogous stopping rule for the inner iteration, expressed as
\[
\|\boldsymbol r_i^{c}\|_2 \le \tau^c\, \|\boldsymbol f^c\|_2.
\]
The results reported in~\cite{Nabben2006} indicate that a fixed tolerance of the form
\begin{equation}\label{fixed_tol}
\tau^c = \varepsilon\, c_F,
\qquad 0 < c_F \le 1,
\end{equation}
is often sufficient. Furthermore, as pointed out in~\cite{Kahl}, a high level of accuracy in the inner problem is crucial mainly during the first few outer iterations, whereas it can be gradually relaxed thereafter. This observation motivates the choice of a variable threshold such as
\begin{equation}\label{tol}
    \tau^c = c_A\, \frac{\varepsilon}{\|\boldsymbol r_i\|},
\end{equation}
for some constant $c_A > 0$. We refer to \eqref{tol} as an \emph{adaptive tolerance rule}~\cite{Kahl}: the relative accuracy required for the inner system becomes less stringent as the outer residual $\boldsymbol r_i$ decreases, while still ensuring that the outer CG iteration converges to the target precision.

\subsection{Multigrid solver for the inner system} \label{analysis_inner}
We then choose $V$ as the kernel of the matrix $M$: this choice is motivated by the fact that the kernel of $M$ captures the problematic spectral components of the system, and deflating this subspace effectively removes the directions responsible for slow convergence of CG as $\Delta t \to 0$. This corresponds precisely to defining the matrix V as in (\ref{Vmatrix}) and the deflation subspace $\mathbf{S}^h = \ker(\mathcal{M}^h)$. Then, the inner system corresponds to the discrete reduced system defined in Definition~\ref{reduced_discr}. From the analysis carried out in Section~\ref{reduced_discrete}, it follows that the inner system corresponds to a Laplacian problem. This feature immediately suggests the use of multigrid methods as efficient solvers for the associated inner problem. 
In our numerical tests, the inner system is handled by a multigrid iterative scheme employing a W-cycle strategy, and as a smoother we use a RAS iteration. We now introduce an iterative multigrid strategy to efficiently solve the inner problem (\ref{inner}). 
Let us rewrite the inner system (\ref{inner}) as 
\begin{align}\label{inner1}
    A^{c}_J \ \bm z_J = \bm f^c_J,
\end{align}
and consider a sequence of finite-dimensional inner product spaces $\boldsymbol{W}_1,   \boldsymbol{W}_2, \dots , \boldsymbol{W}_J$ (possibly non-nested). The inner product on $\boldsymbol{W}_k$ is denoted by $(\cdot, \cdot)_k$. Then, we define the symmetric positive definite bilinear form $\mathcal{A}^c_k(\cdot, \cdot)$ on $\boldsymbol{W}_k\times \boldsymbol{W}_k$ for $k=1, \dots,J$. Our goal is to efficiently solve (\ref{inner1}) at the finest level (i.e. for $k = J$); indeed, the multigrid algorithm that will be presented generates iterative procedures for solving the problem on $\boldsymbol{W}_J$. 
Let $A^c_k: \boldsymbol{W}_k \rightarrow \boldsymbol{W}_k$ be the (symmetric positive definite) operator such that 
\begin{align*}
    (A^c_k \boldsymbol{u}, \boldsymbol{v}) = \mathcal{A}^c_k(\boldsymbol{u}, \boldsymbol{v}) \qquad \text{for all } \boldsymbol{u}, \boldsymbol{v} \in \boldsymbol{W}_k.
\end{align*}
Then the corresponding algebraic formulation at the finest level reads as $
    A^{c}_J \ \bm z_{J} = \bm f^c_J,$ i.e. (\ref{inner1}).
We now recall the main ingredients of the multigrid algorithm \cite{Bramble1992, Bramble1991TheAO}: a \textit{smoother} operator, and \textit{prolongation} and \textit{restriction} operators. 
Denoting by $S_k$ the \textit{smoother}, 
the $j$-th smoothing iteration reads as
\begin{align*}
   \boldsymbol{z}^{(j+1)}_k = \boldsymbol{z}^{(j)}_k + S_k (\boldsymbol{f}^c_k - A^{c}_k \ \boldsymbol{z}^{(j)}_k), \quad \quad j = 1,2, \dots
\end{align*}
Now, since the spaces are (possibly) non-nested, i.e., $\boldsymbol{W}_{k-1} \not \subset \boldsymbol{W}_{k}$, 
the most natural way to define the prolongation operator is through the 
$L^2$-projection; see, e.g. \cite{Antonietti2017VcycleMA}. Thus, the operator 
$I^{k}_{k-1} : \bm W_{k-1} \to \bm W_k$ is then defined by
\begin{equation*}
  \big( I^{k}_{k-1} \bm u_H , \bm v_h \big)_{L^2(\Omega)} 
  = \big( \bm u_H , \bm v_h \big)_{L^2(\Omega)}
  \qquad \forall\, \bm v_h \in \bm W_k .
\end{equation*}
The restriction operator $I^{\,k-1}_{k} : \bm W_k \to \bm W_{k-1}$ is then defined 
as the adjoint of $I^{k}_{k-1}$ with respect to the $L^2(\Omega)$-inner product, 
that is,
\begin{equation*}
  \big( I^{\,k-1}_{k} \bm v_h , \bm u_H \big)_{L^2(\Omega)} 
  = \big( \bm v_h , I^{k}_{k-1} \bm u_H \big)_{L^2(\Omega)}
  \qquad \forall\, \bm u_H \in \bm W_{k-1} .
\end{equation*}
To enhance the performance of the W-cycle, we employ a RAS–type smoothing method~\cite{Ciaramella}. Consider the linear system (\ref{inner1}), and let the set of unknowns be partitioned into blocks $\mathcal{B}_i$, with $i=1, \dots, n_b,$ 
so that every component $z_j$ of the global vector $\bm z$ belongs to at least one 
block. For each block $\mathcal{B}_i$, we denote by 
$R_i : \mathbb{R}^n \rightarrow \mathbb{R}^{n_i}$ 
the operator that extracts from the global vector the subvector associated with 
$\mathcal{B}_i$, where $n_i$ is the size of the block. With a slight abuse of notation, 
we use the same symbol for both the operator and its matrix representation. 
Its transpose, $R_i^{\top}$, acts as a zero-extension prolongation operator from 
$\mathbb{R}^{n_i}$ to $\mathbb{R}^n$. 
The corresponding local matrix is defined as 
$(A^c_k)_i = R_i \ A^c_k \ R_i^{\top}$,
which represents the restriction of $A^c_k$ to the block $\mathcal{B}_i$. 
Thus, each block requires solving the small local problem
$(A^c_k)_i \ \bm z_i^{(j)} = R_i \ (\bm f^c_k - A^c_k \ \bm z_k^{(j)})$.
Once the local problems have been solved, the RAS iteration updates the global solution as follows:
\[
\bm z^{(j+1)}_k
= \bm z^{(j)}_k
+ \sum_{i=1}^{n_b} 
   R_i^{\top} D_i \ \bm z_i^{(j)},
\]
where the matrix $D_i$ denotes a diagonal (partition of unity) weighting operator. In general, its diagonal entries $d_{jj}$, $j=1,\ldots,n$, are chosen as the
reciprocal of the number of blocks that include the unknown $z_j$. \textcolor{black}{Then, the RAS smoother operator is defined as $
S_k = \sum_{i=1}^{n_b} R_i^{\top} D_i (A_k^c)_i^{-1} R_i$. We define the blocks $\mathcal{B}_i$ as the collection of degrees of freedom associated with the $i$-th mesh element and all degrees of freedom belonging to its immediate neighboring elements, resulting in a one-layer overlap.}
This choice balances computational efficiency 
with the smoothing effectiveness of the RAS iteration within the W-cycle.

\subsection{Flexible conjugate gradient method} \label{sec_fcg}
In the proposed approach, the system operator is not applied exactly at each iteration. Indeed, due to the deflation strategy, each outer iteration requires the solution of an inner linear system, which is handled approximately by a multigrid method. As a consequence, the effective operator seen by the outer iteration varies from step to step.
The classical convergence theory of the CG method, however, relies on the assumption that the underlying operator is fixed, symmetric positive definite, and applied exactly at every iteration. When this assumption is violated, for instance due to inexact inner solves or variable tolerances, standard CG may lose robustness and exhibit degraded convergence.
To address this issue, flexible Krylov methods are required. In particular, we employ the Flexible Conjugate Gradient (FCG) method~\cite{Notay}, which is specifically designed to accommodate variable operators. The main distinction with respect to classical CG lies in the explicit $A$-orthogonalization of each new search direction against the previous ones. At iteration $i$, the new direction is orthogonalized with respect to a selected number of earlier directions, determined by a sequence of truncation parameters $\{m_i\}$. In the absence of truncation (FCG($\infty$)), orthogonality is enforced with all previously generated directions, i.e., $m_i = i$ for all $i$.
In Algorithm~\ref{algoFCG} we report the untruncated FCG scheme (FCG($\infty$)) for solving the system under consideration, i.e. $A^* \bm x=\bm f^{*}$, where $A^* \in \mathbb{R}^{N_h \times N_h}$ is symmetric positive definite, $\bm f^{*} \in \mathbb{R}^{N_h}$, and $\bm x_0 \in \mathbb{R}^{N_h}$ is an initial guess (see also Algorithm 2.1 in \cite{Notay}).







\begin{algorithm}
\caption{ \texttt{FCG} (untruncated Flexible Conjugate Gradient method)}\label{algoFCG}
\begin{small}
\begin{algorithmic}[1]
\STATE Input: rhs $\bm f^{*}$, initial guess $\bm x_0$, tolerance tol, map $(\bm d, \text{tol}) \to A^*(\bm d, \text{tol})$
\STATE Output: solution $\bm x$
\STATE $\bm r_0 = \bm f^{*} - A^*( \bm x_0, \text{tol})$;

\FOR{$i = 0,1, \dots$}
    
     
  

    
    
    \STATE $\bm d_i = \bm r_i - \sum_{k=0}^{i-1} \frac{(\bm r_i, \ A^*( \bm d_k, \text{tol}))}{(\bm d_k, \ A^*( \bm d_k, \text{tol}))} \ \bm d_k$;
    \vspace{0.2em}
    \STATE $\alpha_i = \frac{(\bm d_i, \ \bm r_i)}{(\bm d_i, \ A^*( \bm d_i, \text{tol}))}$;
    \vspace{0.2em}
    \STATE $\bm x_{i+1} = \bm x_i + \alpha_i \ \bm d_i$;
    
    \STATE $\bm r_{i+1} = \bm r_i - \alpha_i  \ A^*( \bm d_i, \text{tol})$;
   
\ENDFOR
\end{algorithmic}
\end{small}
\end{algorithm} 

\textcolor{black}{Compared to the CG approach, the untruncated FCG method incurs higher computational and memory costs, as it stores all previous search directions and performs full orthogonalization at each iteration. For a moderate number of iterations, however, this overhead remains acceptable and it is compensated by the improved robustness of the method. Moreover, the added advantage of full flexibility may justify this extra cost. In addition, by using a fully flexible, unrestarted scheme, we avoid delicate parameter tuning and obtain reliable convergence even when the operator is applied inexactly (see also Remark~\ref{remark_fcg}).}
\begin{remark}
\textcolor{black}{An alternative approach to handle the loss of robustness of 
standard CG is the use of the Generalized Minimum Residual (GMRES) 
method~\cite{Saad2003} as outer solver, which builds an orthonormal basis 
of the Krylov subspace via the Arnoldi process and minimises 
the residual over the entire subspace at each iteration. As 
discussed in Remark~\ref{rmk:gmres} below, GMRES eliminates 
the convergence issues observed with standard CG under inexact 
inner solves, confirming its robustness in this setting. The 
choice of FCG over GMRES in our framework is motivated by the 
fact that FCG retains the structure of a conjugate gradient 
method and exploits the symmetry of $A^*$.}
\end{remark}

\subsection{Numerical algorithms} \label{num_algo}
In this section, we detail the numerical algorithms that define the flexible deflated CG method and the multigrid W-cycle scheme adopted in our numerical simulations. 
We start by recalling the interpretation introduced in Section~\ref{review_deflated}: the (flexible) deflated CG method can be viewed as the application of the (flexible) CG algorithm to the deflated linear system~\eqref{defl_sys}. In Algorithm~\ref{algoFDCG} we report the flexible deflated CG scheme for solving (\ref{pb_study}), i.e., the problem $A^* \bm x=\bm f^{*}$.

\begin{algorithm}
\caption{ \texttt{FDCG} (Flexible Deflated Conjugate Gradient method)}\label{algoFDCG}
\begin{small}
\begin{algorithmic}[1]
\STATE Input: matrix $A^*$, rhs $\bm f^{*}$, initial guess $\bm x_0$, smoothing steps $m_1$ and $m_2$, tolerance tol, maximum number of iterations $n_{\max}$, matrix $V$, prolongation $I^{\,k}_{k-1}$ and restriction $I^{\,k-1}_{k}$
\STATE Output: solution $\bm x$

\STATE $Z_J = V^\top A^* V$;

\FOR{$k = J, \dots, 2$}
\STATE $Z_{k-1} = I_k^{k-1} Z_{k} I^k_{k-1}$;
\ENDFOR    

\STATE $ (\bm z, \text{tol}) \to \hat{\bm z}(\bm z, \text{tol})  :=\texttt{Multigrid}(\{Z_k\}_{k=1}^J, V^\top \bm z, 0, m_1, m_2, \text{tol}, n_{\max})$;

\STATE $(\bm z, \text{tol}) \to \pi(\bm z, \text{tol}) := V \ \hat{\bm z}(\bm z, \text{tol}) $;

\STATE  $\underline{ \texttt{Operator}\ M_D \ \texttt{and rhs} \ f_D \ \texttt{of the deflated system:}}$
\vspace{0.2em}
\STATE $(\bm z, \text{tol}) \to M_D(\bm z, \text{tol}) := A^* (\bm z   - \pi(A^* \bm z, \text{tol}))$;

\STATE $\bm f_D = \bm f^{*} - \pi^\top (\bm f^{*}, \text{tol})$;

\STATE $\hat{\bm x} = \texttt{FCG}(\bm f_D, \bm x_0, \text{tol}, M_D)$;

\STATE $\bm x = (\hat{\bm x} - \pi(A^* \hat{\bm x}, \text{tol})) + \pi(\bm f^{*}, \text{tol})$;

\end{algorithmic}
\end{small}
\end{algorithm}

Then, we turn our attention to the multigrid component of the proposed solver, which is employed to solve the inner system (\ref{inner1}). Given an initial guess $\bm z_0 \in \bm W_J$ and choosing the parameters 
$m_1, m_2 \in \mathbb{N}$ (pre-smoothing and post-smoothing steps), the multigrid W-cycle procedure used to 
approximate $\bm z_J$ is summarized in Algorithm~\ref{algo1}. 
We denote by $\bm z_{n+1} = \texttt{Wcycle}(\{A_k^c\}_{k=1}^J, \bm f^c_J, \bm z_n, J, m_1, m_2)$ the approximation obtained after one iteration of the non-nested 
W-cycle scheme. 
The recursive procedure is detailed in Algorithm~\ref{algo2}, where we also note 
that on the coarsest level $k = 1$ the problem is solved exactly. Here, we consider the general problem: find $\bm z \in \bm W_k$ such that $A^c_k \ \bm z = \bm f^c$.

\begin{algorithm}
\caption{\texttt{Multigrid} (Multigrid W-cycle iteration for the solution of (\ref{inner1}))} \label{algo1}
\begin{small}
\begin{algorithmic}[1]
\STATE Input: coarse-level matrices $\{A_k^c\}_{k=1}^J$, rhs $\bm f^c_J$, initial guess $\bm z_0$, smoothing steps $m_1$ and $m_2$, tolerance tol, maximum number of iterations $n_{\max}$
\STATE Output: solution $\bm z$, number of iterations $n$
\WHILE{$n < n_{\max}$ \textbf{and} $\|\bm f^c_J - A_J^c \bm z_n\| / \|\bm f^c_J\| > \text{tol}$}
\vspace{0.2em}
\STATE $\bm z_{n+1} = \texttt{Wcycle}(\{A_k^c\}_{k=1}^J, \bm f^c_J, \bm z_n, J, m_1, m_2)$; 
\STATE $n = n+1;$
\ENDWHILE
    
    



    
\end{algorithmic}
\end{small}
\end{algorithm}

\begin{algorithm}
\caption{\texttt{Wcycle} (one W-cycle iteration for $k \geq 2$ levels)} \label{algo2}
\begin{small}
\begin{algorithmic}[1]
\STATE Input: coarse-level matrices $\{A_k^c\}_{k=1}^J$, rhs $\bm f^c$, initial guess $\bm z_0$, level $k$, smoothing steps $m_1$ and $m_2$
\STATE Output: solution $\bm z$
\IF{$k=1$}
\STATE $\texttt{Wcycle}(\{A_k^c\}_{k=1}^J, \bm f^c, \bm z_0, 1, m_1, m_2) = (A^c_1)^{-1} \bm f^c ;$

\ELSE

\STATE  $\underline{\texttt{Pre-smoothing:}}$
\FOR{$i=1,\dots, m_1$}

\STATE $\bm z^{(i)} = \bm z^{(i-1)} + S_k (\bm f^c - A^c_k \ \bm z^{(i-1)});$

\ENDFOR 

\STATE  $\underline{\texttt{Coarse grid correction:}}$
\STATE $\bm r_{k-1} = I_k^{k-1} (\bm f^c - A^c_k \ \bm z^{(m_1)});$ 
\vspace{0.2em}
\STATE $\boldsymbol{\bar e}_{k-1} = \texttt{Wcycle}(\{A_k^c\}_{k=1}^J, \bm r_{k-1}, 0, k-1, m_1, m_2); $ \\
\vspace{0.2em}
\STATE $\boldsymbol{e}_{k-1} = \texttt{Wcycle}(\{A_k^c\}_{k=1}^J, \bm r_{k-1}, \boldsymbol{\bar e}_{k-1}, k-1, m_1, m_2); $ 
\vspace{0.2em}
\STATE $\bm z^{(m_1 + 1)} = \bm z^{(m_1)} + I^k_{k-1} \boldsymbol{e}_{k-1};$

\STATE  $\underline{\texttt{Post-smoothing:}}$
\FOR{$i=m_1+2,\dots, m_1 + m_2 +1$}

\STATE $\bm z^{(i)} = \bm z^{(i-1)} + S_k (\bm f^c - A^c_k \ \bm z^{(i-1)});$

\ENDFOR 

\STATE $\texttt{Wcycle}(\{A_k^c\}_{k=1}^J, \bm f^c, \bm z_0,k, m_1, m_2) = \bm z^{(m_1+m_2+1)} ;$

\ENDIF
\end{algorithmic}
\end{small}
\end{algorithm}

\section{Numerical results} \label{sec:num_results}
In this section, we present several numerical experiments aimed at assessing the performance of the proposed numerical framework for the solution to~\eqref{pb_study} at each time step. Numerical results are presented in both two- and three-dimensional settings, employing the spatial discretisation based on the PolyDG method introduced in Section~\ref{sec:polydg}, in order to thoroughly assess the performance of the proposed approach.

\subsection{PolyDG Finite Element discretisation - 2D case} \label{2dPolyDG}
In this section, all computations have been implemented in the open-source MATLAB library \texttt{lymph}~\cite{lymph2024}. We consider a two–dimensional problem defined on the unit square $\Omega = (0,1)^2$, and the following exact solution 
$
\boldsymbol{\sigma}(\bm x,t) = \sin(2t) \begin{bmatrix}\sin(\pi x)\sin(\pi y), \ 0,  \ 0,  \ -\sin(\pi x)\sin(\pi y)\end{bmatrix}^\top;$ then $\boldsymbol{F}(\bm x,t)$ and the boundary data are computed accordingly, in particular Dirichlet boundary conditions are imposed on the top and right boundaries, while Neumann conditions are applied on the remaining part of the boundary. For the numerical simulations we set $\mu = 1$, the penalty coefficient $\alpha^* = 10$, the polynomial degree $p = 3$, and $\alpha = \Delta t$ (implicit Euler scheme). We employ the sets of polygonal meshes illustrated in Figure~\ref{fig:Hierarchy}. In particular, the finest grids (Level~4) shown in Figure~\ref{fig:Hierarchy} contain 512 (Set~1), 1024 (Set~2), 2048 (Set~3), and 4096 (Set~4) polygonal elements. Starting from each of these initial meshes, we construct a sequence of 
non-nested partitions. Each coarse mesh is generated independently of the finer one, with the only requirement that the number of elements is approximately one quarter of that of the corresponding finer mesh, as commonly done in the literature (see, e.g., \cite{AntoniettiSarti2014, Antonietti2017VcycleMA}). As a preliminary step, we intend to frame the problem by reporting the condition number of the matrices involved, thereby providing an initial quantitative reference. In particular, in Table~\ref{tab:h_dt}, we report the condition number $\kappa(A^*)$ and the effective condition number $\kappa_{\mathrm{eff}}(A^*(I-\pi_{A^*}(\mathbf{S})))$, for the four finest grids ($h_{1}~\approx~0.082, \ h_{2} ~\approx~0.059, \ h_{3}~\approx~0.043, \ h_{4}~\approx~0.030$), and for different time steps $\Delta t$. We observe that,  for small $\Delta t$, $\kappa$ is proportional to~$1/\Delta t$ as expected, while $\kappa_{\mathrm{eff}}$ remains uniformly bounded with respect to $\Delta t$. The grey-shaded entries in the tables indicate comparable time steps and mesh sizes, and thus, according to Remark~\ref{remark1}, the regime where time and space discretisation errors are balanced.

\begin{figure}[t]
\centering
\renewcommand{\arraystretch}{0.05} 
\resizebox{0.85\textwidth}{!}{
\begin{tabular}{ccccc} 
    & \textbf{Set 1} & \textbf{Set 2} & \textbf{Set 3} & \textbf{Set 4} \\ 

    \raisebox{4.5\height}{\textbf{Level 4}} &
    \includegraphics[width=0.22\textwidth]{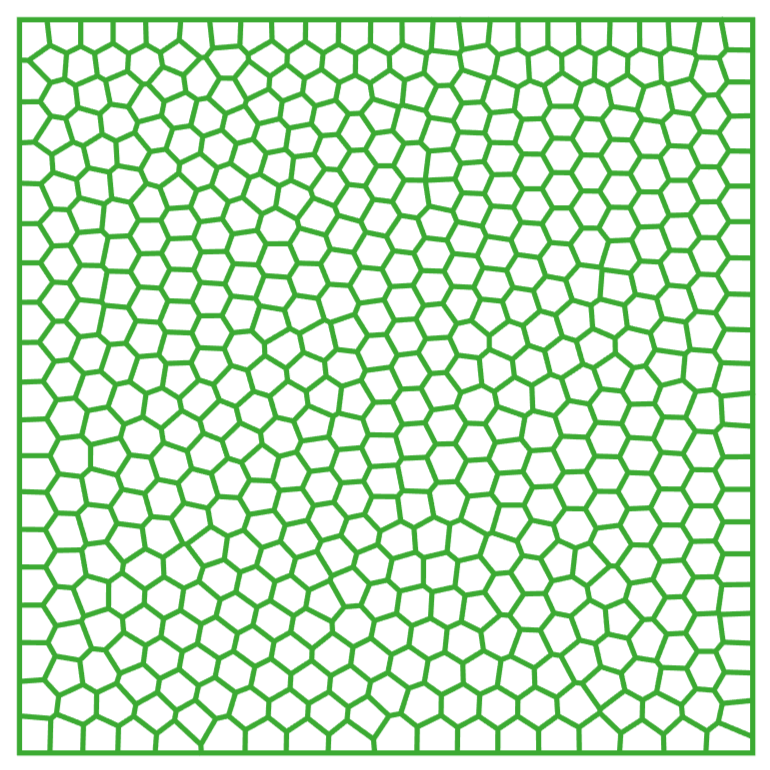} &
    \includegraphics[width=0.22\textwidth]{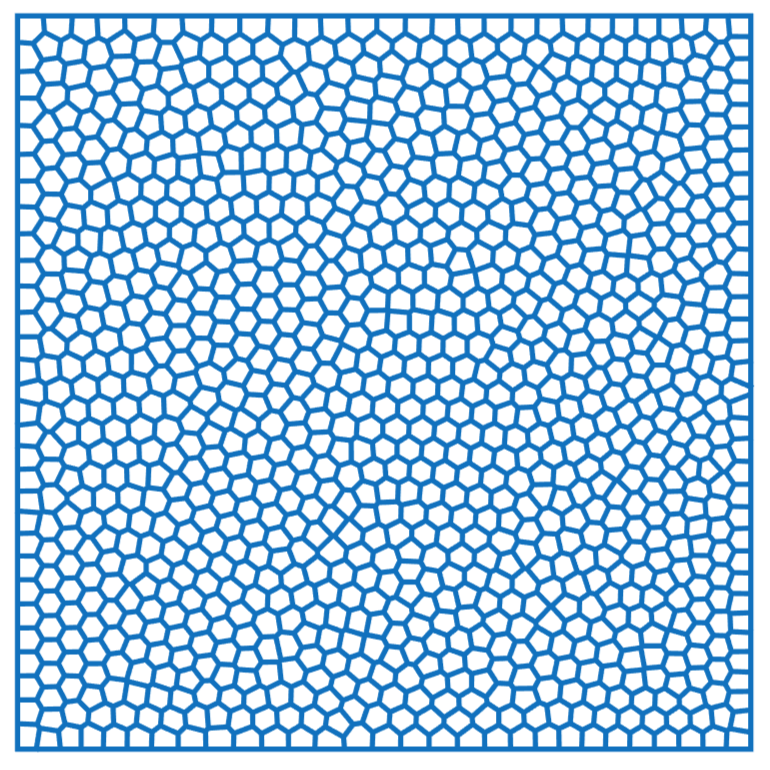} &
    \includegraphics[width=0.22\textwidth]{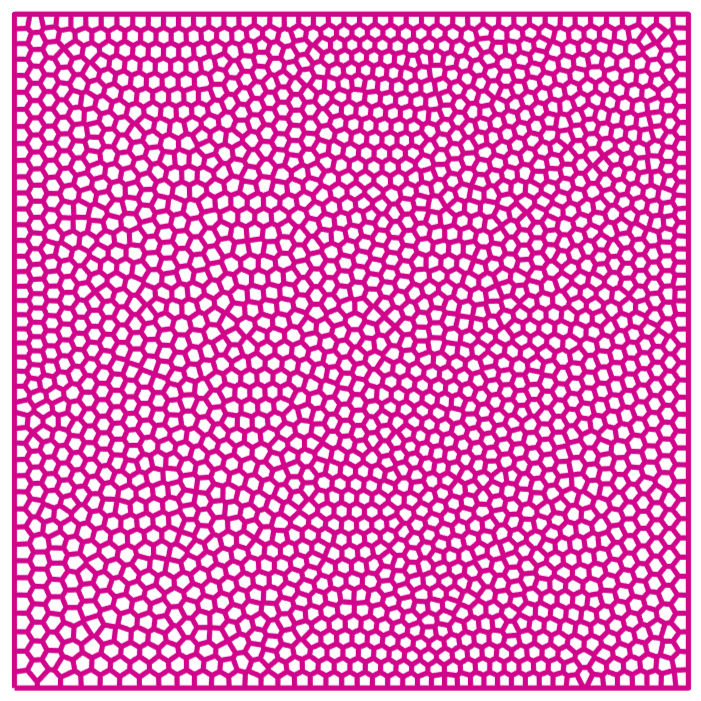} &
    \includegraphics[width=0.22\textwidth]{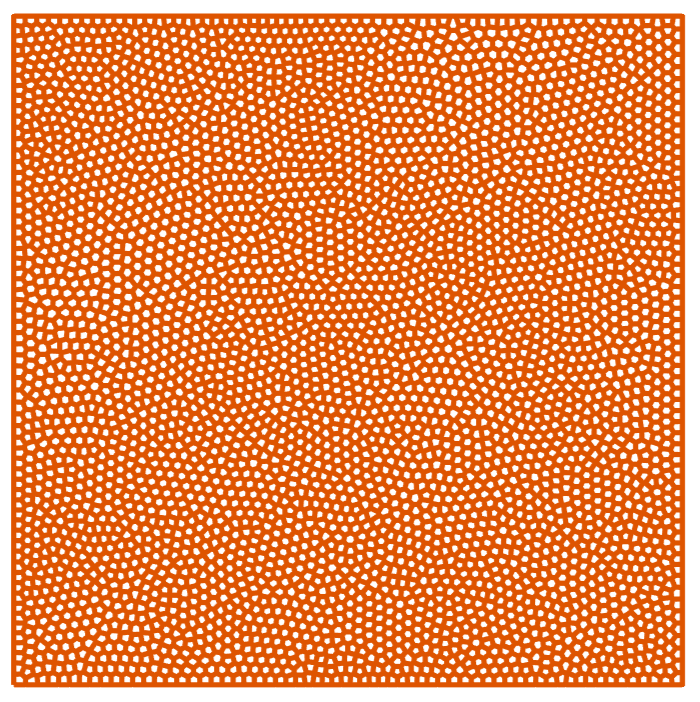} \\

    \raisebox{4.5\height}{\textbf{Level 3}} &
    \includegraphics[width=0.22\textwidth]{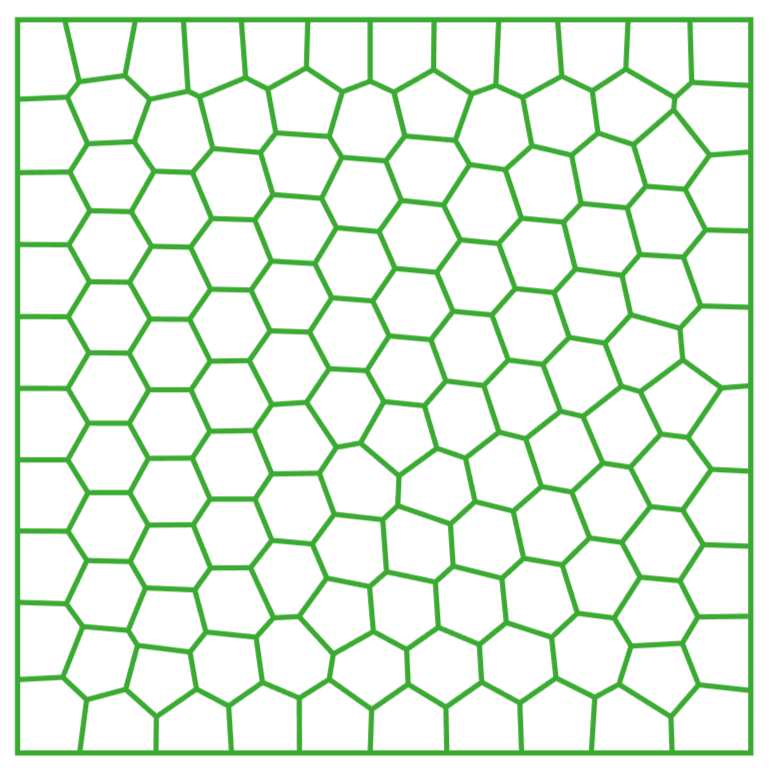} &
    \includegraphics[width=0.22\textwidth]{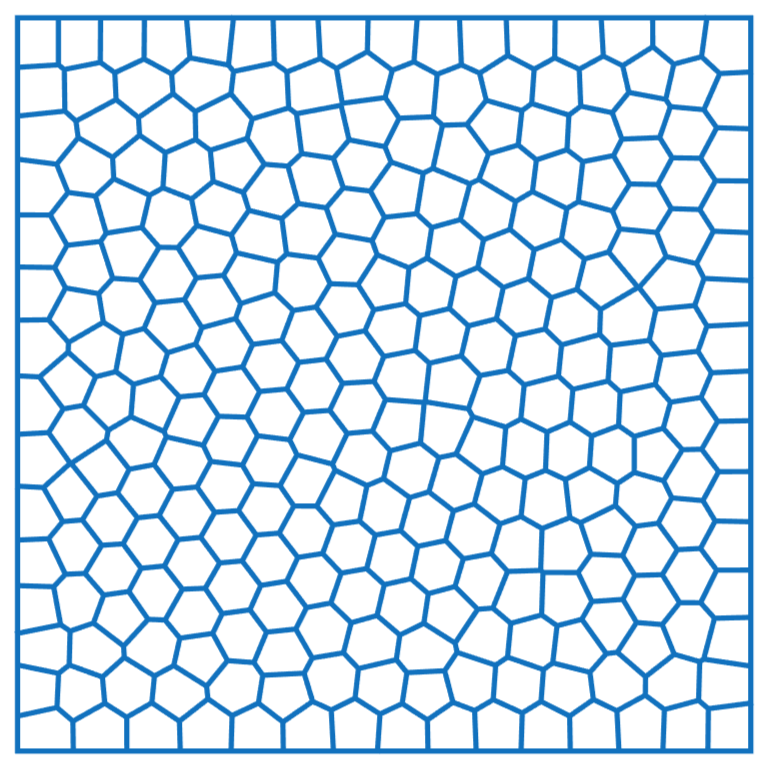} &
    \includegraphics[width=0.22\textwidth]{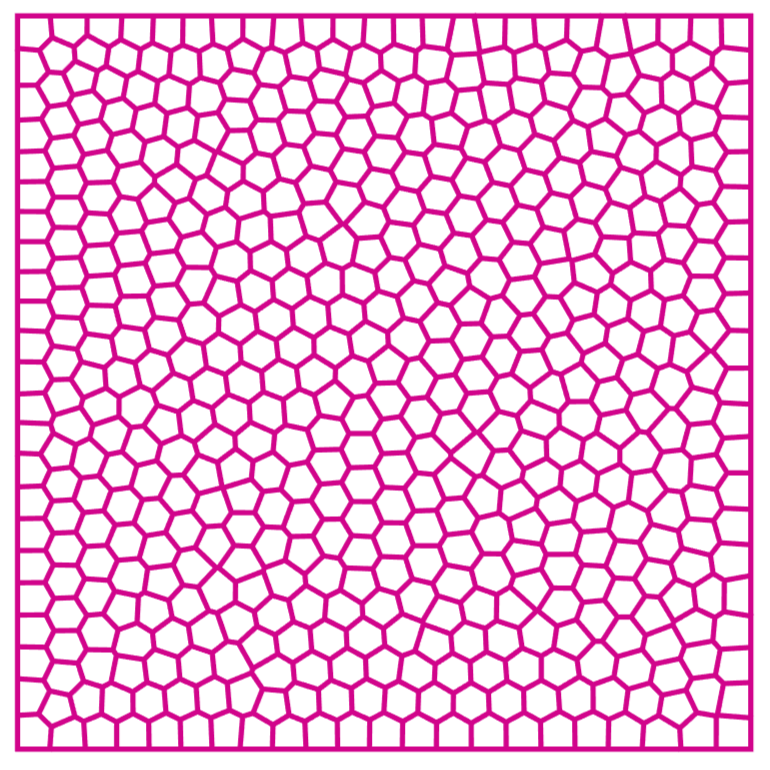} &
    \includegraphics[width=0.22\textwidth]{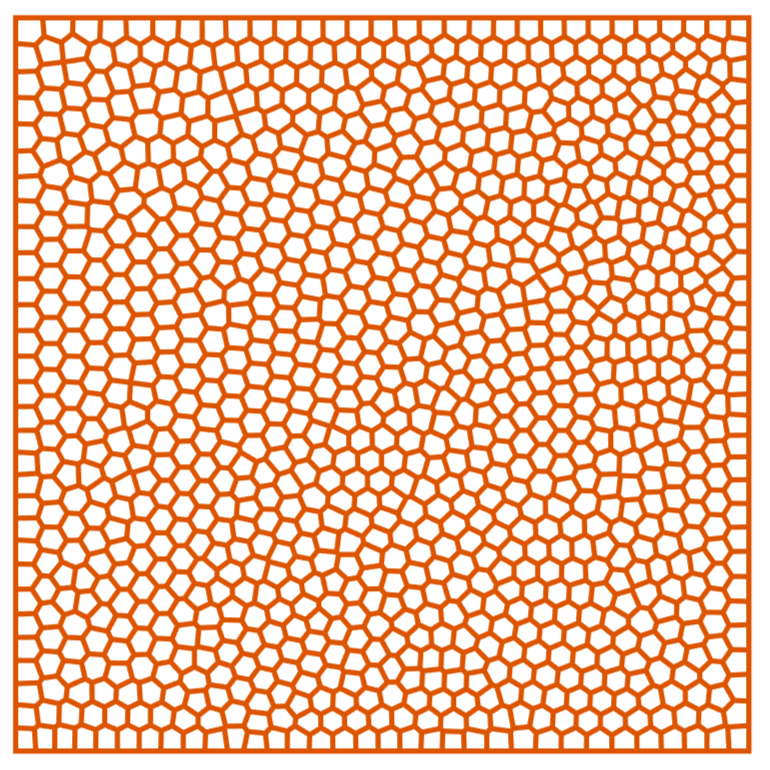} \\

    \raisebox{4.5\height}{\textbf{Level 2}} &
    \includegraphics[width=0.22\textwidth]{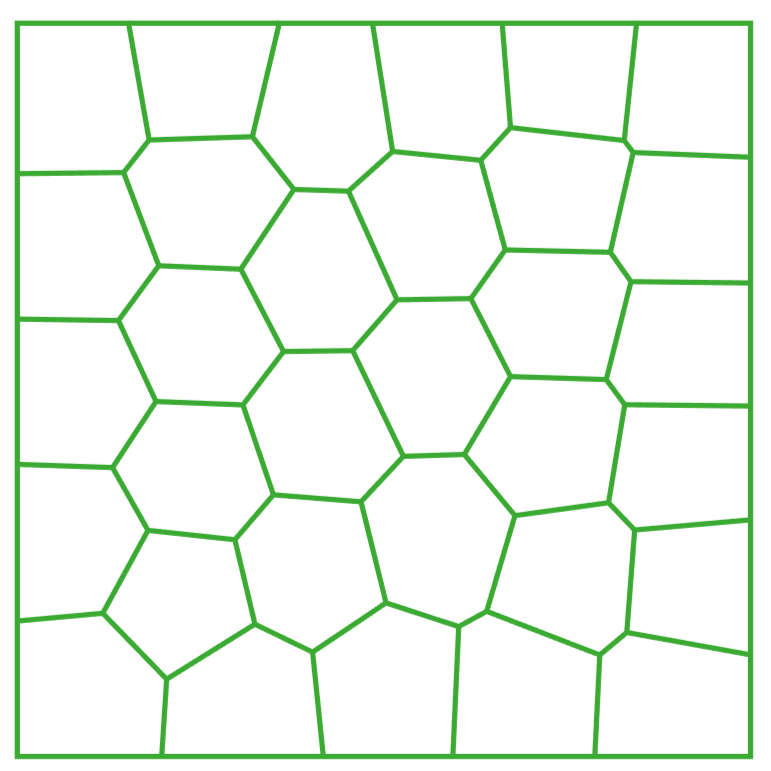} &
    \includegraphics[width=0.22\textwidth]{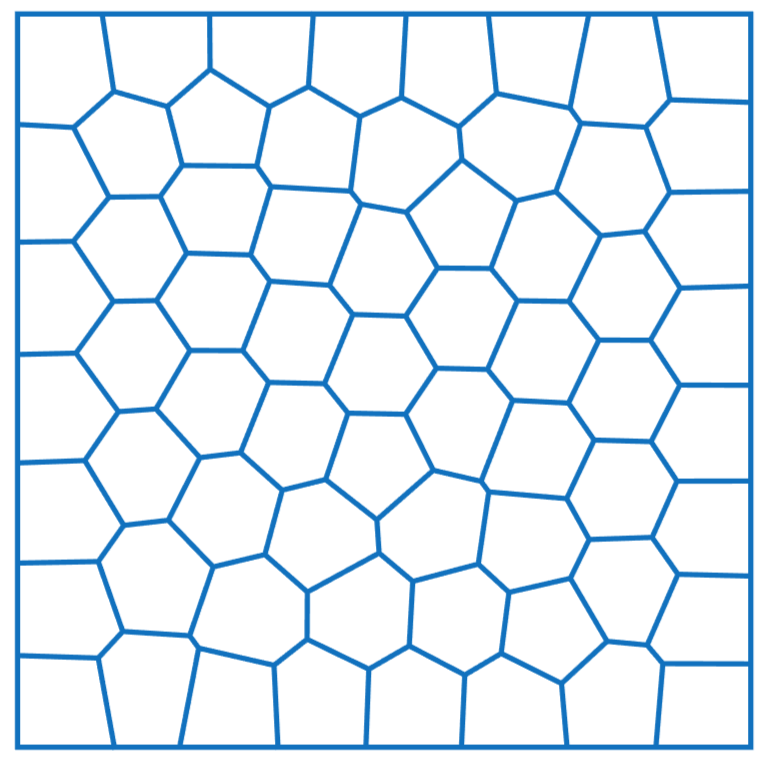} &
    \includegraphics[width=0.22\textwidth]{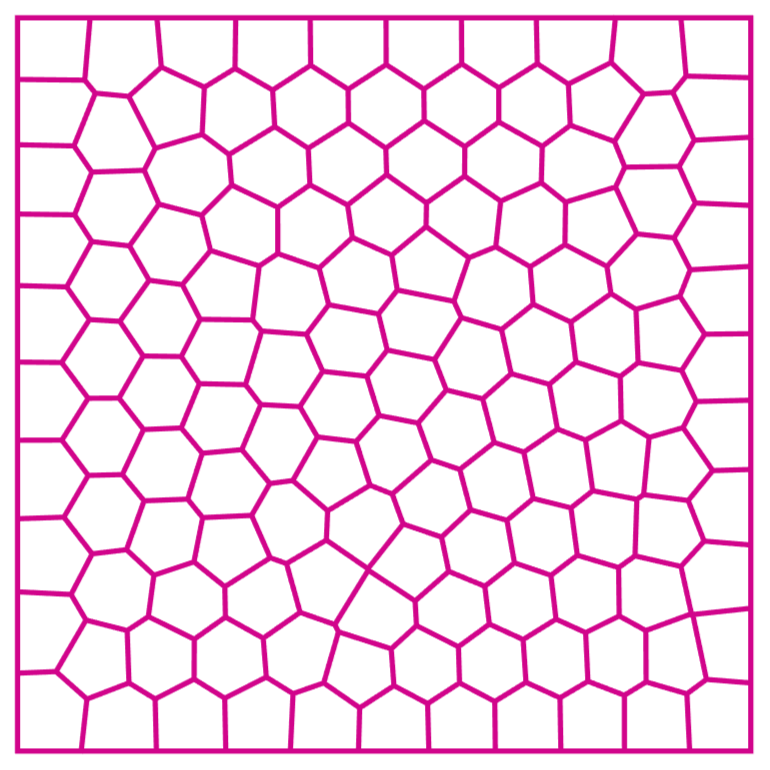} &
    \includegraphics[width=0.22\textwidth]{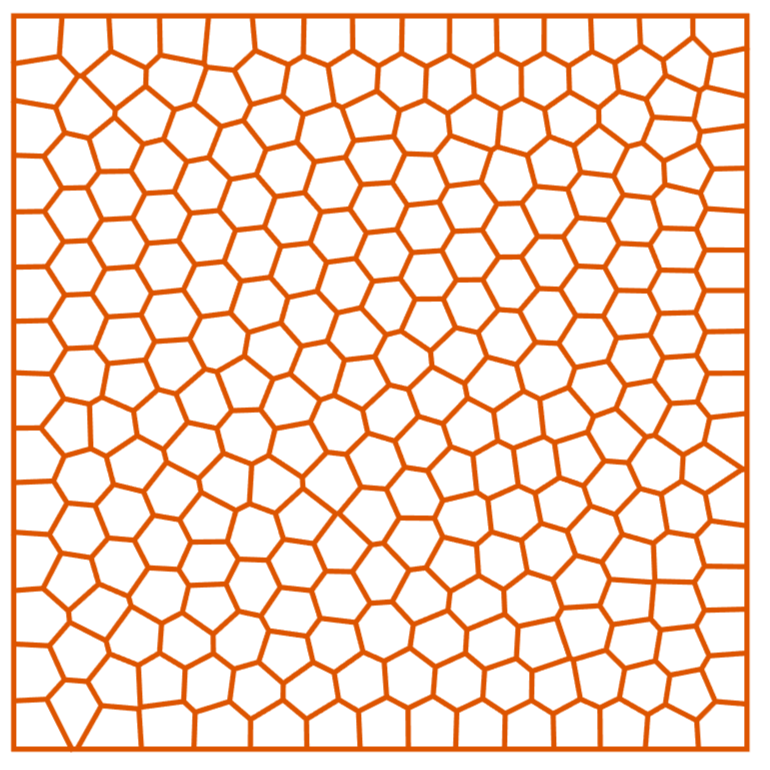} \\

    \raisebox{4.5\height}{\textbf{Level 1}} &
    \includegraphics[width=0.22\textwidth]{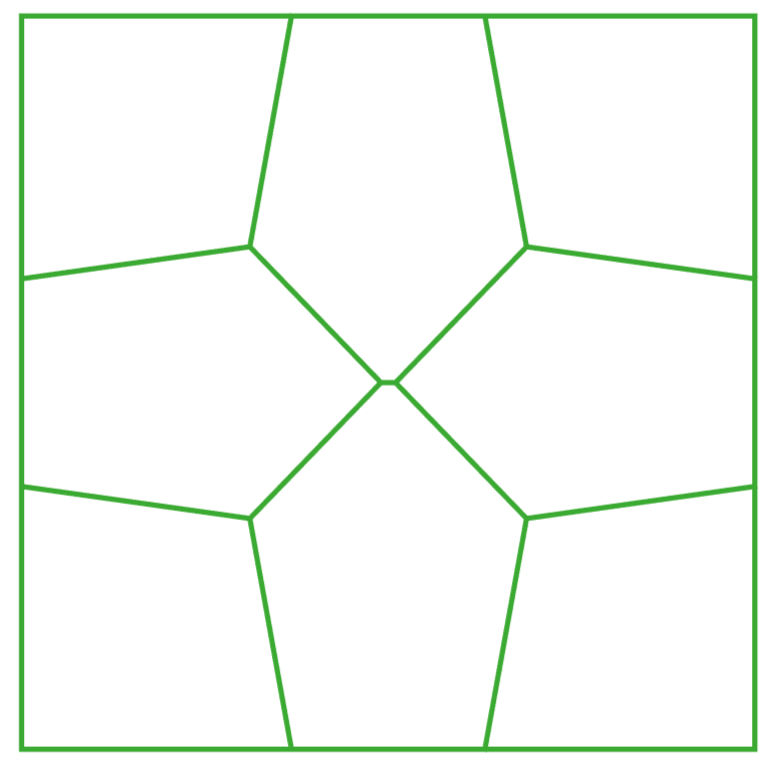} &
    \includegraphics[width=0.22\textwidth]{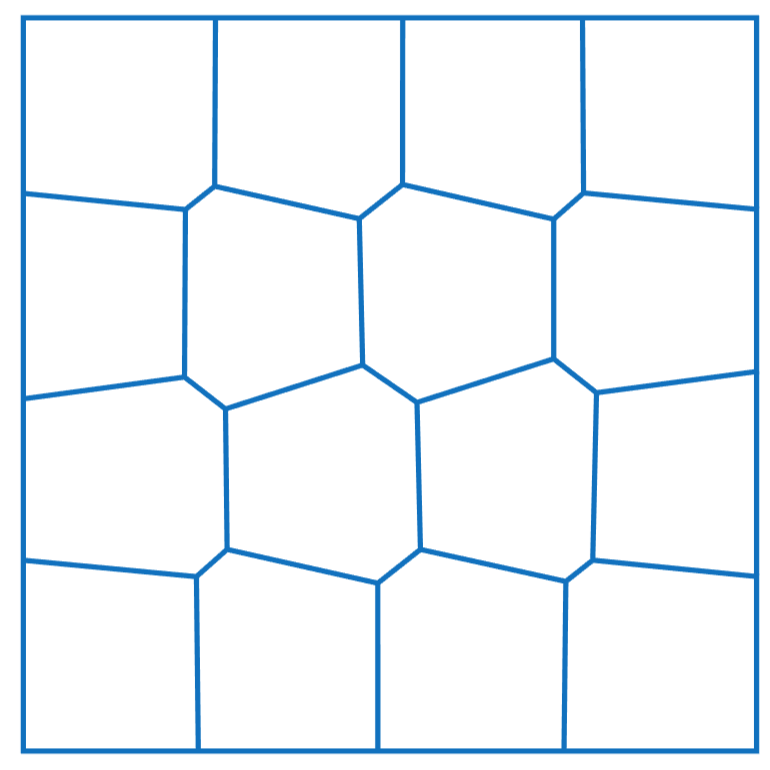} &
    \includegraphics[width=0.22\textwidth]{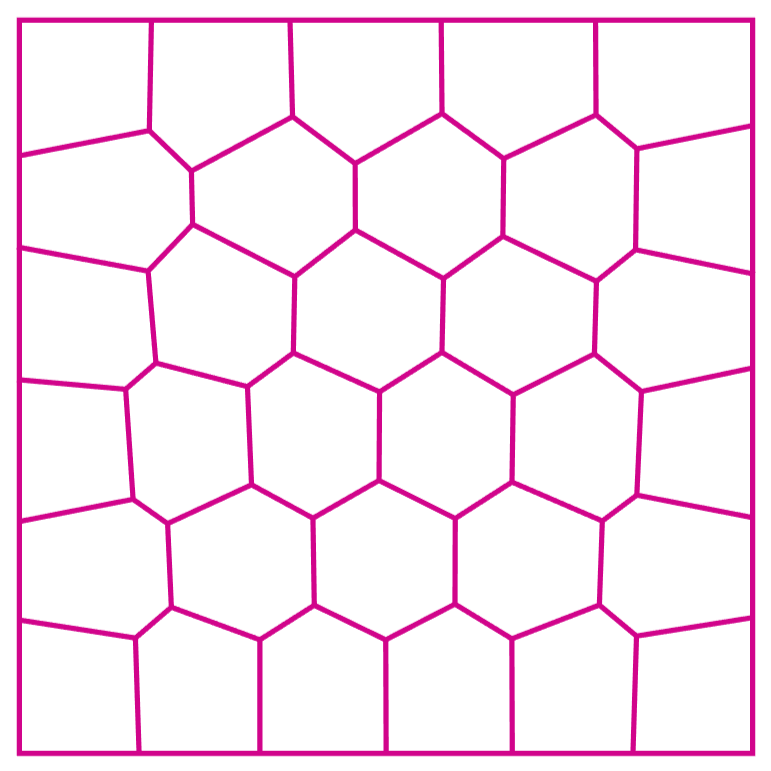} &
    \includegraphics[width=0.22\textwidth]{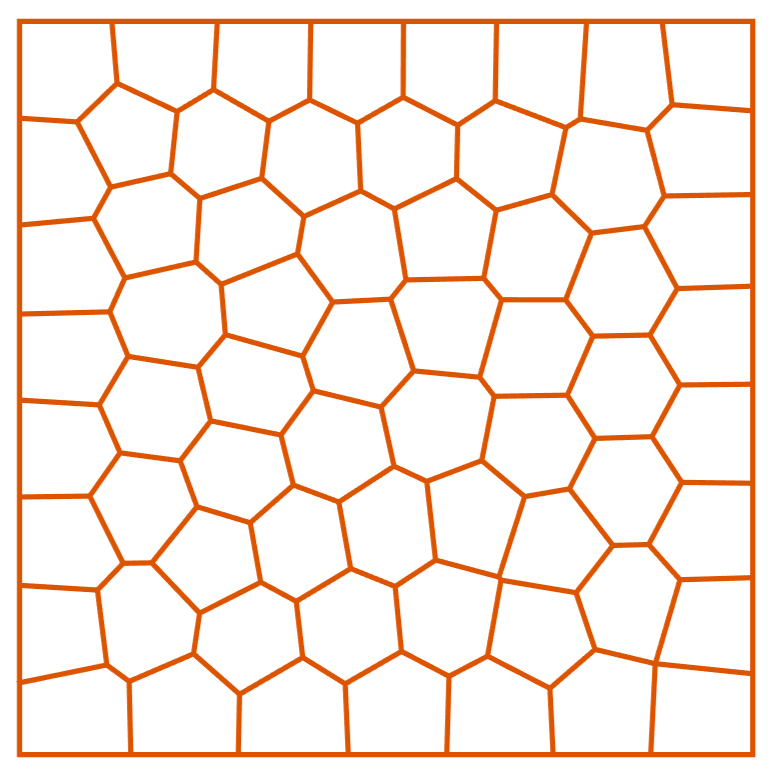} \\

\end{tabular}}

\caption{Hierarchies of non-nested polygonal grids used for numerical simulations of Section~\ref{2dPolyDG}. Finest grid of Set 1 with 512 elements; finest grid of Set 2 with 1024 elements; finest grid of Set 3 with 2048 elements; finest grid of Set 4 with 4096 elements.}
\label{fig:Hierarchy}
\end{figure}

\begin{table}[t]
\centering
\footnotesize
  \caption{$\kappa(A^*)$ (left) and $\kappa_{\mathrm{eff}}(A^*(I-\pi_{A^*}(\mathbf{S})))$ (right) as functions of $h$ and $\Delta t$.}
\label{tab:h_dt}
\renewcommand{\arraystretch}{1.0} 
\setlength{\tabcolsep}{6pt}      
\begin{tabular}{c|cccc|cccc}

\hline
\textbf{$\Delta t$} & \textbf{$h_1$} & \textbf{$h_2$} & \textbf{$h_3$} & \textbf{$h_4$} & \textbf{$h_1$} & \textbf{$h_2$} & \textbf{$h_3$} & \textbf{$h_4$} \\
\hline
$10^{-2}$ &  1.7$\cdot 10^{6}$ & 3.6$\cdot 10^{6}$ & 7.4$\cdot 10^{6}$ & 1.5$\cdot 10^{7}$ & 4.4$\cdot 10^{4}$ & 9.1$\cdot 10^{4}$ & 1.9$\cdot 10^{5}$ & 4.0$\cdot 10^{5}$  \\
$10^{-3}$ &  1.4$\cdot 10^{6}$ & 3.0$\cdot 10^{6}$ & 6.2$\cdot 10^{6}$ & 1.3$\cdot 10^{7}$ & 1.2$\cdot 10^{4}$ & 1.5$\cdot 10^{4}$ & 3.3$\cdot 10^{4}$ & 5.4$\cdot 10^{4}$ \\
$10^{-4}$ &  \cellcolor{lightgray} 1.4$\cdot 10^{6}$ & \cellcolor{lightgray} 2.8$\cdot 10^{6}$ &  5.9$\cdot 10^{6}$ &  1.2$\cdot 10^{7}$ & \cellcolor{lightgray} 3.5$\cdot 10^{3}$ & \cellcolor{lightgray} 4.8$\cdot 10^{3}$ & 9.2$\cdot 10^{3}$ & 1.7$\cdot 10^{4}$ \\
$10^{-5}$ &  \cellcolor{lightgray} 1.4$\cdot 10^{6}$ & \cellcolor{lightgray} 2.8$\cdot 10^{6}$ & \cellcolor{lightgray} 5.8$\cdot 10^{6}$ & \cellcolor{lightgray} 1.2$\cdot 10^{7}$ & \cellcolor{lightgray} 8.3$\cdot 10^{2}$ & \cellcolor{lightgray} 1.3$\cdot 10^{3}$ & \cellcolor{lightgray} 2.4$\cdot 10^{3}$ & \cellcolor{lightgray} 4.2$\cdot 10^{3}$ \\
$10^{-6}$ & \cellcolor{lightgray} 1.7$\cdot 10^{6}$ & \cellcolor{lightgray} 3.2$\cdot 10^{6}$ &  \cellcolor{lightgray} 6.1$\cdot 10^{6}$ & \cellcolor{lightgray} 1.2$\cdot 10^{7}$  & \cellcolor{lightgray} 1.9$\cdot 10^{2}$ & \cellcolor{lightgray} 3.1$\cdot 10^{2}$ & \cellcolor{lightgray} 6.4$\cdot 10^{2}$ & \cellcolor{lightgray} 1.2$\cdot 10^{3}$ \\
$10^{-7}$ &  6.5$\cdot 10^{6}$ & 7.4$\cdot 10^{6}$ & \cellcolor{lightgray} 1.0$\cdot 10^{7}$ & \cellcolor{lightgray} 1.6$\cdot 10^{7}$ & 8.0$\cdot 10^{1}$ & 9.3$\cdot 10^{1}$ & \cellcolor{lightgray} 1.4$\cdot 10^{2}$ & \cellcolor{lightgray} 2.5$\cdot 10^{2}$ \\
$10^{-8}$ & 6.4$\cdot 10^{7}$ & 6.9$\cdot 10^{7}$ & 7.7$\cdot 10^{7}$ & 7.4$\cdot 10^{7}$ & 8.1$\cdot 10^{1}$ & 9.0$\cdot 10^{1}$ & 1.2$\cdot 10^{2}$ & 1.3$\cdot 10^{2}$ \\
$10^{-9}$ & 6.4$\cdot 10^{8}$ & 6.9$\cdot 10^{8}$ & 7.7$\cdot 10^{8}$ & 7.4$\cdot 10^{8}$  & 8.1$\cdot 10^{1}$ & 9.0$\cdot 10^{1}$ & 1.2$\cdot 10^{2}$ & 1.3$\cdot 10^{2}$ \\
$10^{-10}$ & 6.4$\cdot 10^{9}$ & 6.9$\cdot 10^{9}$ & 7.7$\cdot 10^{9}$ & 7.4$\cdot 10^{9}$ & 8.1$\cdot 10^{1}$ & 9.0$\cdot 10^{1}$ & 1.2$\cdot 10^{2}$ & 1.3$\cdot 10^{2}$ \\
\hline
\end{tabular}

\end{table}
Table~\ref{tab:DCGsolvers} reports the iteration counts needed by the CG and the deflated CG algorithms to reduce the relative residual below a tolerance tol $= 10^{-8}$, for different time steps 
$\Delta t$ and mesh sizes $h$. \textcolor{black}{In the deflated CG strategy, the inner system is solved exactly by a direct solver.} For each method, the table reports the number of iterations required to reduce the  relative residual below $10^{-8}$, considering only the solution of the linear system (\ref{pb_study}) arising at a single time step. The iteration counts are obtained as the average over 10 independent tests, in which the initial condition is perturbed using randomly generated data. In particular, Table~\ref{tab:DCGsolvers} shows that the CG iteration counts increase with mesh refinement ($h \to 0$) and exhibit a growth proportional to $\sqrt{\Delta t}$ as $\Delta t \to 0$. In contrast, the deflated CG method yields dramatically fewer iterations as $\Delta t \to 0$, confirming the effectiveness of the deflation strategy. 

\begin{table}[t]
\centering
\footnotesize
 \caption{Iteration counts for CG (left) and deflated CG (right) as functions of $h$ and $\Delta t$, with $\text{tol}=10^{-8}$.}
\label{tab:DCGsolvers}
\renewcommand{\arraystretch}{1.0}
\setlength{\tabcolsep}{6pt}

\centering
\begin{tabular}{c|cccc|cccc}
\hline

$\Delta t$ & $h_1$ & $h_2$ & $h_3$ & $h_4$ & $h_1$ & $h_2$ & $h_3$ & $h_4$ \\
\hline

$10^{-2}$    & 3766 & 5255 & 7538 & 10631 & 2528 & 3463 & 5003 & 6846  \\
$10^{-3}$   & 3110 &  4417 & 6231 & 8080 & 1057 &  1430 & 1989 & 2709 \\
$10^{-4}$  & \cellcolor{lightgray} 2936 & \cellcolor{lightgray} 4110 &  5807 &  8037 & \cellcolor{lightgray} 465 & \cellcolor{lightgray} 609 &  836 &  1091 \\
$10^{-5}$ & \cellcolor{lightgray} 2940 & \cellcolor{lightgray} 4090 & \cellcolor{lightgray} 5741 & \cellcolor{lightgray} 7941 & \cellcolor{lightgray} 208 & \cellcolor{lightgray} 277 & \cellcolor{lightgray} 374 & \cellcolor{lightgray} 489 \\
$10^{-6}$ & \cellcolor{lightgray} 3185 & \cellcolor{lightgray} 4247 & \cellcolor{lightgray} 5845 & \cellcolor{lightgray} 8021 & \cellcolor{lightgray} 102 & \cellcolor{lightgray} 133 & \cellcolor{lightgray} 187 & \cellcolor{lightgray} 243 \\
$10^{-7}$ & 6012 & 5845 & \cellcolor{lightgray} 6745 & \cellcolor{lightgray} 8161 & 69 & 75 & \cellcolor{lightgray} 89 & \cellcolor{lightgray} 113  \\
$10^{-8}$ & 14910 & 15581 & 16618 & 16315 & 70 & 75 & 83 & 84 \\
\hline
\end{tabular}

\end{table}
Table~\ref{tab:DCGsolvers_h_innerp3} shows the results obtained with the deflated CG method, where the 
inner system is now solved by a multigrid W-cycle approach with RAS as smoother. The multigrid hierarchy consists of four levels (as shown in Figure~\ref{fig:Hierarchy}), and we 
apply $m = 5$ pre- and post-smoothing steps. In particular, we report the number of outer iterations of the deflated 
CG method (Out), the total number of multigrid iterations 
required to solve the inner problem (Tot), and the average number of multigrid inner iterations performed per outer iteration (Inn). In the first block row of Table~\ref{tab:DCGsolvers_h_innerp3}, we present the results obtained by setting the tolerance of the inner 
system equal to $0.01$ times the outer tolerance, i.e., as defined in (\ref{fixed_tol}) with $c_F = 0.01$. Then, we replace the fixed tolerance used for the inner multigrid solver with an 
adaptive tolerance. More precisely, at each outer iteration the tolerance for 
the inner system is set as a function of the outer residual, following the 
criterion~(\ref{tol}) with a user-defined parameter $c_A>0$. In our tests, we explore different values of $c_A$ in order to assess their impact on the behavior of the method. Smaller values of $c_A$ enforce a tighter accuracy 
for the inner solver, resulting in more robust convergence of the overall method, 
although at the cost of an increased number of inner iterations. On the other hand, 
larger values of $c_A$ lead to a faster inner solver and fewer iterations, but also 
to a higher incidence of convergence failures in the outer deflated CG method (indicated by ``--'' in Table~\ref{tab:DCGsolvers_h_innerp3}).  
For comparison, the second block row of Table~\ref{tab:DCGsolvers_h_innerp3} presents the behavior for $c_A = 0.02$, while in the third block row we report the results for $c_A = 0.01$, and in the fourth block row those corresponding to $c_A = 0.005$. \textcolor{black}{When convergence is achieved, the adaptive tolerance strategy systematically outperforms the fixed tolerance approach in terms of computational cost, as clearly shown by the total number of multigrid iterations (Tot) reported in the corresponding columns.}

\begin{table}[htbp]
\centering
\footnotesize
 \caption{Iteration counts for deflated CG for different values of $c_F$ in (\ref{fixed_tol}) and $c_A$ in (\ref{tol}).}
\label{tab:DCGsolvers_h_innerp3}
\renewcommand{\arraystretch}{1.0}
\setlength{\tabcolsep}{4pt}

\begin{tabular}{c|c|ccc|ccc|ccc|ccc}
\hline
$c_F \ \text{or} \ c_A$ & $\Delta t$ 
& \multicolumn{3}{c|}{$h_1$} 
& \multicolumn{3}{c|}{$h_2$} 
& \multicolumn{3}{c|}{$h_3$} 
& \multicolumn{3}{c}{$h_4$}\\
\hline
& & Out & Tot & Inn 
& Out & Tot & Inn 
& Out & Tot & Inn 
& Out & Tot & Inn \\
\hline

\multirow{5}{*}{$c_F = 0.01$} &
$10^{-4}$ 
 & \cellcolor{lightgray} 467 & \cellcolor{lightgray}6779 & \cellcolor{lightgray} 15
 & \cellcolor{lightgray} 623 & \cellcolor{lightgray} 9426 & \cellcolor{lightgray} 15
 & 700 & 11100 & 16
 & 1102 & 20764 & 19 \\ &
$10^{-5}$ 
 & \cellcolor{lightgray} 212 & \cellcolor{lightgray} 3066 & \cellcolor{lightgray} 14 
 & \cellcolor{lightgray} 278 & \cellcolor{lightgray} 4232 & \cellcolor{lightgray} 15 
 & \cellcolor{lightgray} 376 & \cellcolor{lightgray} 5790 & \cellcolor{lightgray} 15 
 & \cellcolor{lightgray} 489 & \cellcolor{lightgray} 9115 & \cellcolor{lightgray} 19  \\ &
$10^{-6}$ 
 & \cellcolor{lightgray} 102 & \cellcolor{lightgray} 1512 & \cellcolor{lightgray} 15
 & \cellcolor{lightgray} 135 & \cellcolor{lightgray} 2039 & \cellcolor{lightgray} 15
 & \cellcolor{lightgray} 190 & \cellcolor{lightgray} 2899 & \cellcolor{lightgray} 15
 & \cellcolor{lightgray} 238 & \cellcolor{lightgray} 4435 & \cellcolor{lightgray} 19 \\ &
$10^{-7}$ 
 & 69 & 1006 & 15
 & 75 & 1143 & 15
 & \cellcolor{lightgray} 89 & \cellcolor{lightgray} 1363 & \cellcolor{lightgray} 15
 & \cellcolor{lightgray} 114 & \cellcolor{lightgray} 2017 & \cellcolor{lightgray} 18  \\ &
$10^{-8}$ 
 & 70 & 1023 & 15
 & 74 & 1118 & 15
 & 83 & 1282 & 15
 & 83 & 1524 & 18 \\
\hline

\multirow{5}{*}{$c_A = 0.02$} &
$10^{-4}$ 
 & \cellcolor{lightgray} 479 & \cellcolor{lightgray} 2174 & \cellcolor{lightgray} 5
 & \cellcolor{lightgray} 638 & \cellcolor{lightgray} 3132 & \cellcolor{lightgray} 5
 & 860 & 4196 & 5
 & 1127 & 6136 & 5  \\ &
$10^{-5}$ 
 & \cellcolor{lightgray} -- & \cellcolor{lightgray} --  & \cellcolor{lightgray} -- 
 & \cellcolor{lightgray} -- & \cellcolor{lightgray} -- & \cellcolor{lightgray} -- 
 & \cellcolor{lightgray} -- & \cellcolor{lightgray} -- & \cellcolor{lightgray} -- 
 & \cellcolor{lightgray} 503 & \cellcolor{lightgray} 2074 & \cellcolor{lightgray} 4 \\ &
$10^{-6}$ 
 & \cellcolor{lightgray} -- & \cellcolor{lightgray} -- & \cellcolor{lightgray} --
 & \cellcolor{lightgray} -- & \cellcolor{lightgray} -- & \cellcolor{lightgray} --
 & \cellcolor{lightgray} -- & \cellcolor{lightgray} -- & \cellcolor{lightgray} --
 & \cellcolor{lightgray} -- & \cellcolor{lightgray} -- & \cellcolor{lightgray} -- \\ &
$10^{-7}$ 
 & 70 & 203
 & 3 & 103
 & 243 &  2 & \cellcolor{lightgray} --
 & \cellcolor{lightgray} --  & \cellcolor{lightgray} -- &  \cellcolor{lightgray} -- & \cellcolor{lightgray} -- & \cellcolor{lightgray} -- \\ &
$10^{-8}$ 
 & 70 & 213 & 3
 & 75 & 212 & 3
 & 83 & 216 & 3
 & 117 & 253 & 2 \\
\hline

\multirow{5}{*}{$c_A = 0.01$} &
$10^{-4}$ 
 & \cellcolor{lightgray} 476 & \cellcolor{lightgray} 2411 & \cellcolor{lightgray} 5
 & \cellcolor{lightgray} 634 & \cellcolor{lightgray} 3446 & \cellcolor{lightgray} 5
 & 854 & 4635 & 5
 & 1118 & 6819 & 6  \\ &
$10^{-5}$ 
 & \cellcolor{lightgray} 217 & \cellcolor{lightgray} 888  & \cellcolor{lightgray} 4 
 & \cellcolor{lightgray} 319 & \cellcolor{lightgray} 1264 & \cellcolor{lightgray} 4 
 & \cellcolor{lightgray} 384 & \cellcolor{lightgray} 1616 & \cellcolor{lightgray} 4 
 & \cellcolor{lightgray} 501 & \cellcolor{lightgray} 2315 & \cellcolor{lightgray} 5 \\ &
$10^{-6}$ 
 & \cellcolor{lightgray} 136 & \cellcolor{lightgray} 406 & \cellcolor{lightgray} 3
 & \cellcolor{lightgray} 245 & \cellcolor{lightgray} 575 & \cellcolor{lightgray} 2
 & \cellcolor{lightgray} -- & \cellcolor{lightgray} -- & \cellcolor{lightgray} --
 & \cellcolor{lightgray} 283 & \cellcolor{lightgray} 900 & \cellcolor{lightgray} 3 \\ &
$10^{-7}$ 
 & 69 & 227
 & 3 & 103
 & 266 &  3 & \cellcolor{lightgray} 162
 & \cellcolor{lightgray} 336  & \cellcolor{lightgray} 2 &  \cellcolor{lightgray} 174 & \cellcolor{lightgray} 397 & \cellcolor{lightgray} 2 \\ &
$10^{-8}$ 
 & 70 & 237 & 3
 & 74 & 232 & 3
 & 83 & 243 & 3
 & 85 & 244 & 3 \\
\hline

\multirow{5}{*}{$c_A = 0.005$} &
$10^{-4}$ 
 & \cellcolor{lightgray} 476 & \cellcolor{lightgray} 2635 & \cellcolor{lightgray} 6
 & \cellcolor{lightgray} 629 & \cellcolor{lightgray} 3742 & \cellcolor{lightgray} 6
 & 854 & 5162 & 6
 & 1114 & 7512 & 7  \\ &
$10^{-5}$ 
 & \cellcolor{lightgray} 215 & \cellcolor{lightgray} 971  & \cellcolor{lightgray} 5
 & \cellcolor{lightgray} 303 & \cellcolor{lightgray} 1360 & \cellcolor{lightgray} 4 
 & \cellcolor{lightgray} 382 & \cellcolor{lightgray} 1784 & \cellcolor{lightgray} 5 
 & \cellcolor{lightgray} 499 & \cellcolor{lightgray} 2555 & \cellcolor{lightgray} 5 \\ &
$10^{-6}$ 
 & \cellcolor{lightgray} 134 & \cellcolor{lightgray} 438 & \cellcolor{lightgray} 3
 & \cellcolor{lightgray} 171 & \cellcolor{lightgray} 548 & \cellcolor{lightgray} 3
 & \cellcolor{lightgray} 296 & \cellcolor{lightgray} 791 & \cellcolor{lightgray} 3
 & \cellcolor{lightgray} 276 & \cellcolor{lightgray} 988 & \cellcolor{lightgray} 4 \\ &
$10^{-7}$ 
 & 69 & 249
 & 4 & 78
 & 264 &  3 & \cellcolor{lightgray} 90
 & \cellcolor{lightgray} 296  & \cellcolor{lightgray} 3 &  \cellcolor{lightgray} 148 & \cellcolor{lightgray} 405 & \cellcolor{lightgray} 3 \\ &
$10^{-8}$ 
 & 70 & 261 & 4
 & 74 & 263 & 4
 & 83 & 268 & 3
 & 84 & 269 & 3 \\
\hline

\end{tabular}

\end{table}
When using an adaptive tolerance for the inner multigrid solver, certain choices of the user-defined parameter $c_A$ may lead to situations where the deflated CG method fails to converge. 
To address this issue, we employ the untruncated version of flexible CG, i.e. FCG($\infty$), for the outer solver. 
 To verify the effectiveness of the FCG approach, we replicate the tests reported in Table~\ref{tab:DCGsolvers_h_innerp3}, this time employing the FCG($\infty$) variant for the outer solver. 
The numerical results presented in Table~\ref{tab:FCGsolvers_h_inner_c6p3} show that, with FCG($\infty$), all previously critical cases now converge successfully. Moreover, convergence is preserved in all other non-critical cases, indicating that the use of FCG enhances robustness without compromising the performance of the method in well-behaved scenarios.

\begin{table}[htbp]
\centering
\footnotesize
 \caption{Iteration counts for FCG($\infty$) for different values of $c_F$ in (\ref{fixed_tol}) and $c_A$ in (\ref{tol}).}
\label{tab:FCGsolvers_h_inner_c6p3}
\renewcommand{\arraystretch}{1.0}
\setlength{\tabcolsep}{4pt}

\begin{tabular}{c|c|ccc|ccc|ccc|ccc}
\hline
$c_F \ \text{or} \ c_A$ & $\Delta t$ 
& \multicolumn{3}{c|}{$h_1$} 
& \multicolumn{3}{c|}{$h_2$} 
& \multicolumn{3}{c|}{$h_3$} 
& \multicolumn{3}{c}{$h_4$}\\
\hline
& & Out & Tot & Inn 
& Out & Tot & Inn 
& Out & Tot & Inn 
& Out & Tot & Inn \\
\hline

\multirow{5}{*}{$c_F = 0.01$} &
$10^{-4}$ 
 & \cellcolor{lightgray} 467 & \cellcolor{lightgray}6779 & \cellcolor{lightgray} 15
 & \cellcolor{lightgray} 623 & \cellcolor{lightgray} 9426 & \cellcolor{lightgray} 15
 & 700 & 11100 & 16
 & 1102 & 20764 & 19 \\ &
$10^{-5}$ 
 & \cellcolor{lightgray} 212 & \cellcolor{lightgray} 3066 & \cellcolor{lightgray} 14 
 & \cellcolor{lightgray} 278 & \cellcolor{lightgray} 4232 & \cellcolor{lightgray} 15 
 & \cellcolor{lightgray} 376 & \cellcolor{lightgray} 5790 & \cellcolor{lightgray} 15 
 & \cellcolor{lightgray} 489 & \cellcolor{lightgray} 9115 & \cellcolor{lightgray} 19  \\ &
$10^{-6}$ 
 & \cellcolor{lightgray} 102 & \cellcolor{lightgray} 1512 & \cellcolor{lightgray} 15
 & \cellcolor{lightgray} 135 & \cellcolor{lightgray} 2039 & \cellcolor{lightgray} 15
 & \cellcolor{lightgray} 190 & \cellcolor{lightgray} 2899 & \cellcolor{lightgray} 15
 & \cellcolor{lightgray} 238 & \cellcolor{lightgray} 4435 & \cellcolor{lightgray} 19 \\ &
$10^{-7}$ 
 & 69 & 1006 & 15
 & 75 & 1143 & 15
 & \cellcolor{lightgray} 89 & \cellcolor{lightgray} 1363 & \cellcolor{lightgray} 15
 & \cellcolor{lightgray} 114 & \cellcolor{lightgray} 2017 & \cellcolor{lightgray} 18  \\ &
$10^{-8}$ 
 & 70 & 1023 & 15
 & 74 & 1118 & 15
 & 83 & 1282 & 15
 & 83 & 1524 & 18 \\
\hline

\multirow{5}{*}{$c_A = 0.02$} &
$10^{-4}$ 
 & \cellcolor{lightgray} 425 & \cellcolor{lightgray} 2016 & \cellcolor{lightgray} 5
 & \cellcolor{lightgray} 564 & \cellcolor{lightgray} 2880 & \cellcolor{lightgray} 5
 & 764 & 3846 & 5
 & 1003 & 5629 & 6  \\ &
$10^{-5}$ 
 & \cellcolor{lightgray} 249 & \cellcolor{lightgray} 825  & \cellcolor{lightgray} 3 
 & \cellcolor{lightgray} 355 & \cellcolor{lightgray} 1141 & \cellcolor{lightgray} 3 
 & \cellcolor{lightgray} 500 & \cellcolor{lightgray} 1511 & \cellcolor{lightgray} 3 
 & \cellcolor{lightgray} 467 & \cellcolor{lightgray} 1980 & \cellcolor{lightgray} 4 \\ &
$10^{-6}$ 
 & \cellcolor{lightgray} 132 & \cellcolor{lightgray} 344 & \cellcolor{lightgray} 3
 & \cellcolor{lightgray} 314 & \cellcolor{lightgray} 596 & \cellcolor{lightgray} 2
 & \cellcolor{lightgray} 390 & \cellcolor{lightgray} 744 & \cellcolor{lightgray} 2
 & \cellcolor{lightgray} 428 & \cellcolor{lightgray} 930 & \cellcolor{lightgray} 2 \\ &
$10^{-7}$ 
 & 70 & 203
 & 3 & 102
 & 242 &  2 & \cellcolor{lightgray} 281
 & \cellcolor{lightgray} 427  & \cellcolor{lightgray} 2 &  \cellcolor{lightgray} 276 & \cellcolor{lightgray} 459 & \cellcolor{lightgray} 2 \\ &
$10^{-8}$ 
 & 70 & 213 & 3
 & 75 & 212 & 3
 & 83 & 216 & 3
 & 116 & 252 & 2 \\
\hline

\multirow{5}{*}{$c_A = 0.01$} &
$10^{-4}$ 
 & \cellcolor{lightgray} 423 & \cellcolor{lightgray} 2238 & \cellcolor{lightgray} 5
 & \cellcolor{lightgray} 561 & \cellcolor{lightgray} 3159 & \cellcolor{lightgray} 6
 & 761 & 4270 & 6
 & 996 & 6247 & 6  \\ &
$10^{-5}$ 
 & \cellcolor{lightgray} 206 & \cellcolor{lightgray} 854  & \cellcolor{lightgray} 4 
 & \cellcolor{lightgray} 294 & \cellcolor{lightgray} 1207 & \cellcolor{lightgray} 4 
 & \cellcolor{lightgray} 357 & \cellcolor{lightgray} 1543 & \cellcolor{lightgray} 4 
 & \cellcolor{lightgray} 463 & \cellcolor{lightgray} 2205 & \cellcolor{lightgray} 5 \\ &
$10^{-6}$ 
 & \cellcolor{lightgray} 132 & \cellcolor{lightgray} 400 & \cellcolor{lightgray} 3
 & \cellcolor{lightgray} 233 & \cellcolor{lightgray} 560 & \cellcolor{lightgray} 2
 & \cellcolor{lightgray} 404 & \cellcolor{lightgray} 823 & \cellcolor{lightgray} 2
 & \cellcolor{lightgray} 268 & \cellcolor{lightgray} 875 & \cellcolor{lightgray} 3 \\ &
$10^{-7}$ 
 & 69 & 227
 & 3 & 102
 & 265 &  3 & \cellcolor{lightgray} 159
 & \cellcolor{lightgray} 333  & \cellcolor{lightgray} 2 &  \cellcolor{lightgray} 170 & \cellcolor{lightgray} 393 & \cellcolor{lightgray} 2 \\ &
$10^{-8}$ 
 & 70 & 237 & 3
 & 74 & 232 & 3
 & 83 & 243 & 3
 & 84 & 243 & 3 \\
\hline

\multirow{5}{*}{$c_A = 0.005$} &
$10^{-4}$ 
 & \cellcolor{lightgray} 423 & \cellcolor{lightgray} 2443 & \cellcolor{lightgray} 6
 & \cellcolor{lightgray} 561  & \cellcolor{lightgray} 3440 & \cellcolor{lightgray} 6
 & 759 & 4734 & 6
 & 994 & 6881 & 7  \\ &
$10^{-5}$ 
 & \cellcolor{lightgray} 203 & \cellcolor{lightgray} 941  & \cellcolor{lightgray} 5
 & \cellcolor{lightgray} 280 & \cellcolor{lightgray} 1308 & \cellcolor{lightgray} 5 
 & \cellcolor{lightgray} 355 & \cellcolor{lightgray} 1707 & \cellcolor{lightgray} 5 
 & \cellcolor{lightgray} 462 & \cellcolor{lightgray} 2429 & \cellcolor{lightgray} 5 \\ & 
$10^{-6}$ 
 & \cellcolor{lightgray} 130 & \cellcolor{lightgray} 435 & \cellcolor{lightgray} 3
 & \cellcolor{lightgray} 164 & \cellcolor{lightgray} 538 & \cellcolor{lightgray} 3
 & \cellcolor{lightgray} 280 & \cellcolor{lightgray} 769 & \cellcolor{lightgray} 3
 & \cellcolor{lightgray} 261 & \cellcolor{lightgray} 964 & \cellcolor{lightgray} 4 \\ &
$10^{-7}$ 
 & 69 & 249
 & 4 & 77
 & 263 &  3 & \cellcolor{lightgray} 89
 & \cellcolor{lightgray} 295  & \cellcolor{lightgray} 3 &  \cellcolor{lightgray} 144 & \cellcolor{lightgray} 401 & \cellcolor{lightgray} 3 \\ &
$10^{-8}$ 
 & 70 & 261 & 4
 & 74 & 263 & 4
 & 83 & 268 & 3
 & 84 & 269 & 3 \\
\hline
\end{tabular}

\end{table}
\begin{remark} \label{remark_fcg}
\textcolor{black}{In this work we consider the variant FCG($\infty$), where no truncation of the search directions is applied. If a mixed truncation–restart strategy \cite{Notay} is employed, with $m_0 = 0$ and $m_i = \max\{1, \mathrm{mod}(i, m_{\max}+1)\}$, the performance of the method depends on the user-defined parameters $(c_A, m_{\max})$. In particular, $m_{\max}$ must be tuned with respect to $c_A$, as different combinations may lead to either robust or deteriorated convergence. For this reason, we propose the untruncated FCG($\infty$) approach, which avoids the need to tune the parameters.}  
\end{remark}

\begin{remark}\label{rmk:gmres}
\textcolor{black}{For completeness, we also tested a GMRES variant with adaptive 
inner tolerance, following the same strategy as the FCG($\infty$) 
counterpart. In all tested configurations, GMRES converged 
robustly, without exhibiting the convergence issues observed 
with standard CG under inexact inner solves. GMRES therefore 
represents a viable alternative to FCG($\infty$).}
\end{remark}

\subsection{DG Finite Element discretisation - 3D case} \label{3d_simulations}
In this section, all results are obtained with the FEniCS software. We consider $\Omega = (0,1)^3$ and we set $\boldsymbol{F} = \boldsymbol{0}$ and $\boldsymbol{\sigma}_0 = \boldsymbol{0}$ as initial condition. We impose $\bm \nabla \cdot \bm \sigma = (sin(\pi y)sin(\pi z),0,0)^\top$ on $\Gamma_{\text{in}} = \{0\} \times (0,1) \times (0,1)$, \textcolor{black}{$\bm \sigma \bm n = \bm 0$  on $\Gamma_{\text{out}} =\{1\} \times (0,1) \times (0,1)$, and $\bm \nabla \cdot \bm \sigma = \boldsymbol{0}$ in the remaining part of the boundary}. For the numerical simulations, we set $\mu = 0.5$, the penalty coefficient $\alpha^* = 40$, we fix $p=1$ and $\alpha = \Delta t$ (implicit Euler scheme in time). We consider three nested meshes: mesh~$1$ consisting of 48 tetrahedra ($h_1 \approx 0.8660$), mesh~$2$ made by 384 tetrahedra ($h_2 \approx 0.4330$), and mesh~$3$ with 3072 tetrahedra ($h_3 \approx 0.2165$). These meshes are first employed to compare the performance of CG and deflated CG methods, and are then used to construct the multigrid hierarchy, repeating the tests carried out in Section~\ref{2dPolyDG} for the 2D case.
In particular, in Table~\ref{tab:DCGsolvers_3D} (left) we report the iteration counts needed by the CG and the deflated CG algorithms to reduce the relative residual below $10^{-8}$, for different time steps 
$\Delta t$ and mesh sizes $h$. In this case, in the deflated CG strategy, the inner system is solved exactly by a direct solver. The 3D results exhibit the same trend observed in 2D: CG deteriorates with mesh refinement and as $\Delta t \to 0$, while the deflated CG remains uniformly efficient and robust with respect to $\Delta t$.

\begin{table}[t]
\centering
\footnotesize
\caption{Left table: Iteration counts for CG (left) and deflated CG (right) as functions of $h$ and $\Delta t$, with $\text{tol}=10^{-8}$. Right table: Iteration counts for deflated CG  for mesh $3$, for different values of $c_F$ in (\ref{fixed_tol}) and $c_A$ in (\ref{tol}).}
\label{tab:DCGsolvers_3D}
\renewcommand{\arraystretch}{1.0}
\setlength{\tabcolsep}{2pt} 

\begin{minipage}{0.42\textwidth}
\centering
\begin{tabular}{c|ccc|ccc}
\hline
$\Delta t$ & $h_1$ & $h_2$ & $h_3$ & $h_1$ & $h_2$ & $h_3$ \\
\hline
$1$       & \cellcolor{lightgray} 897 & 1537 & 2947 & \cellcolor{lightgray} 271  & 445 & 823 \\
$0.5$     & \cellcolor{lightgray} 767 & \cellcolor{lightgray} 1311 & \cellcolor{lightgray} 2555 & \cellcolor{lightgray} 242 & \cellcolor{lightgray} 394 & \cellcolor{lightgray} 733 \\
$0.1$     & \cellcolor{lightgray} 477 & \cellcolor{lightgray} 845 & \cellcolor{lightgray} 1646 & \cellcolor{lightgray} 153 & \cellcolor{lightgray} 273 & \cellcolor{lightgray} 483 \\
$10^{-2}$ & 252 & \cellcolor{lightgray}  405 & \cellcolor{lightgray} 769 & 58 & \cellcolor{lightgray}  112 & \cellcolor{lightgray} 219  \\
$10^{-3}$ &  258 &  328 & 523 &  24 &  41 & 75  \\
$10^{-4}$ &  329 & 417 & 578 & 11 & 17 & 29  \\
$10^{-5}$ &  362 & 528 & 855 & 7 & 9 & 13 \\
\hline
\end{tabular}
\end{minipage}
\hfill 
%
\begin{minipage}{0.55\textwidth}
\centering
\begin{tabular}{c|ccc|ccc|ccc}
\hline
$\Delta t$ 
& \multicolumn{3}{c|}{$c_F = 0.01$} 
& \multicolumn{3}{c|}{$c_A = 0.02$} 
& \multicolumn{3}{c}{$c_A = 0.005$}\\
\hline
& Out & Tot & Inn
& Out & Tot & Inn
& Out & Tot & Inn \\
\hline
$0.5$ 
  &  972 & 31625
 &  33 & 1081 &  12687
 & 12 & 1017 & 14494 & 14 \\ 
$0.1$ 
  &  522 & 16656
 &  32 & 603 &  7684
 & 13 & 559 & 8741 & 16 \\ 
$10^{-2}$ 
&  219 & 6595
 &  30 & 242 &  2905
 & 12 & 220 & 3344 & 15 \\ 
$10^{-3}$ 
 & 75 & 2148
 & 29 & 75 & 928 &  12
 &  75 &  1081 &  14  \\ 
 $10^{-4}$ 
 & 29 & 824
 & 28 & 29 & 342 &  12
 &  29 &  407 &  14  \\ 
\hline
\end{tabular}
\end{minipage}

\end{table}

%
%
%
%
We now repeat the same set of tests for the deflated CG method, where the inner system is now solved by a multigrid W-cycle approach with RAS as smoother, in the 3D setting. In this case, the finest grid used is the one consisting of 3072 tetrahedral elements (mesh~$3$). The multigrid hierarchy is therefore built on the 
three nested levels (mesh~$1$ $\subset$ mesh~$2 \subset$ mesh~$3$), and we employ $m = 10$ 
pre- and post-smoothing steps.
In Table~\ref{tab:DCGsolvers_3D} (right) we test one value of the fixed parameter $c_F$ and two values of the adaptive parameter $c_A$ to assess their effect on the efficiency and robustness of the method. The observed results are consistent with the 2D case, confirming the same qualitative behavior.

%
%
%
%
In Figure~\ref{fig:plots3d}, we report the pressure and velocity field corresponding to the test case introduced above. The pressure is reconstructed from the numerical pseudo-stress solution through the relation $p = -\frac1d {\rm tr}(\boldsymbol{\sigma})$, while the velocity is recovered by using
$
\boldsymbol{u}(t)=\boldsymbol{u}_0+\int_0^t\big(\boldsymbol{\nabla}\cdot\boldsymbol{\sigma}(s)\big)\,\mathrm{d}s.
$
The results shown in the figure are obtained on the finest mesh of the hierarchy (i.e., mesh~3 consisting of 3072 tetrahedral elements) with a time step $\Delta t=0.5$. To reconstruct the velocity, a single implicit Euler step is performed. The simulation is carried out using the deflated CG method, with the inner linear systems solved by a three-level nested multigrid algorithm and an adaptive inner tolerance (with $c_A=0.02$). The plots confirm that the proposed approach is able to accurately reproduce the expected physics of the system.

\begin{figure}[t]
\centering
\resizebox{0.90\textwidth}{!}{
\begin{tabular}{cc}

\includegraphics[width=0.48\textwidth]{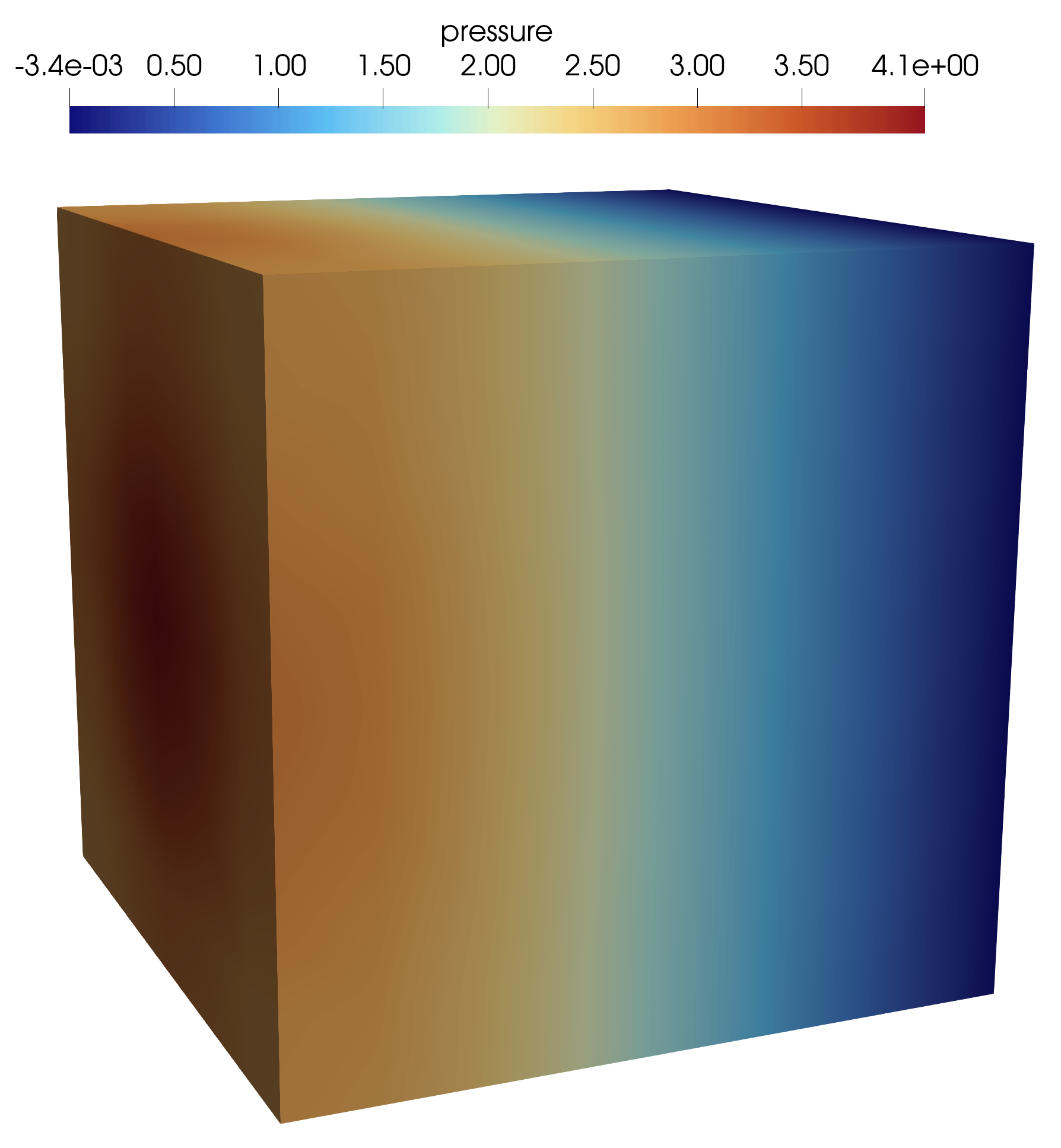} &
\includegraphics[width=0.48\textwidth]{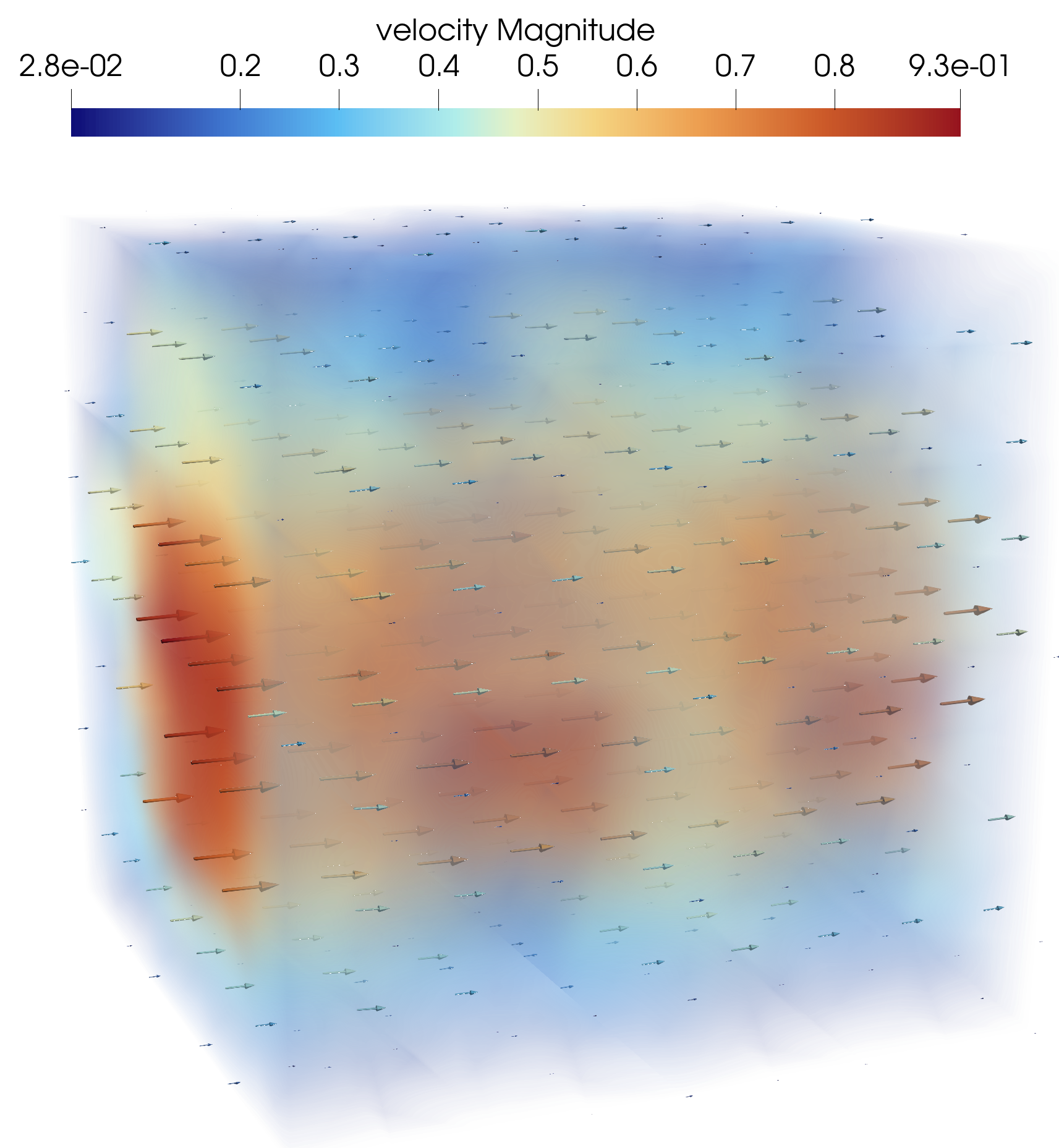} \\


\end{tabular}
}

\caption{Test case of Section~\ref{3d_simulations}. Left: computed pressure field $p_h$ by means of the relation $p_h= -\frac13 \mathrm{tr}(\bm \sigma_h)$. Right: computed velocity field $\bm u_{h}$ by means of the relation $\bm u_h =\bm u_{0,h} + \int_0^T \boldsymbol{\nabla}\cdot\boldsymbol{\sigma}_h(s)\, ds $, with final time $T=0.5$.}
\label{fig:plots3d}
\end{figure}

\section{Conclusions} \label{sec:conclusions}
In this work, we have proposed a numerical solver for the linear system arising from conforming and discontinuous finite element discretisations of the pseudo-stress unsteady Stokes problem. The main difficulty is related to the deterioration of the conditioning as the time step decreases. To address this issue, we introduce a deflation-based strategy that removes the contribution of the kernel components, thereby ensuring robustness with respect to the time step.
The resulting inner system is solved through an efficient multigrid solver. Since the inner system is solved only approximately, we employ a flexible Conjugate Gradient method to guarantee stable convergence of the overall algorithm.
As a result, the proposed approach combines deflation, multigrid acceleration, and flexible Krylov techniques into a scalable solver. The robustness of the method is confirmed by numerical results, which show stable convergence with respect to the time step.
Possible further developments may include the investigation of multigrid methods as preconditioners for the outer Conjugate Gradient solver, in order to achieve robustness also with respect to the mesh size. Another interesting direction is the extension of the proposed solver to other time discretisation schemes, including Runge--Kutta methods, for the pseudo-stress Stokes problem.

\section*{Acknowledgments}
This work is funded by the European Union (ERC SyG, NEMESIS, project number 101115663). Views and opinions expressed are however those of the authors only and do not necessarily reflect those of the European Union or the European Research Council Executive Agency. All authors are members of the Indam GNCS group.

\end{document}